\newtheorem{theorem}{Theorem}[section]
\newtheorem{corollary}[theorem]{Corollary}
\newtheorem{lemma}[theorem]{Lemma}
\newtheorem{proposition}[theorem]{Proposition}
\newtheorem{remark}[theorem]{Remark}
\newtheorem{example}[theorem]{Example}
\begin{document}
\title[Algebraic connectivity with pendant vertices]{Algebraic connectivity of connected graphs with fixed number of pendant vertices}
\author[A. K. Lal, K. L. Patra and B. K. Sahoo]{}
\address[]{}
\maketitle

\begin{center}
Arbind K. Lal\footnote{
Department of Mathematics and Statistics, Indian Institute of Technology, Kanpur--208016, India; e-mail: arlal@iitk.ac.in}, Kamal L. Patra\footnote{School of Mathematical Sciences, National Institute of Science Education and Research, Sainik School Post, Bhubaneswar--751005, India;
e-mail: klpatra@niser.ac.in} and
Binod K. Sahoo\footnote{School of Mathematical Sciences, National Institute of Science Education and Research, Sainik School Post, Bhubaneswar--751005, India; e-mail: bksahoo@niser.ac.in}
\end{center}

\begin{abstract}
In this paper we consider the following problem: Over the class of all simple connected graphs of order $n$ with $k$ pendant vertices ($n,k$ being fixed), which graph maximizes (respectively, minimizes) the algebraic connectivity? We also discuss the algebraic connectivity of unicyclic graphs.\\

\noindent Keywords: Laplacian matrix; Algebraic connectivity; Characteristic set;
Perron component; Pendant vertex.
\end{abstract}

\section{Introduction}

All graphs considered here are finite, simple and undirected. Let $G$ be a graph with
vertex set $V=\{v_1,\cdots,v_n\}$. The number $n$ of vertices is called the {\it order} of $G$. The {\em adjacency matrix} $A(G)$ of $G$ is defined as $A(G)=[a_{ij}]$, where $a_{ij}$ is equal to one, if the
unordered pair $\{v_i,v_j\}$ is an edge of $G$ and zero, otherwise. Let
$D(G)$ be the diagonal matrix of the vertex degrees of $G$. The {\em
Laplacian matrix} $L(G)$ of $G$ is defined as $L(G)=D(G)-A(G)$. We refer to \cite{mer1,mohar} for a general overview on results related to Laplacians. It is well known that $L(G)$ is a symmetric positive semidefinite $M$-matrix. The smallest eigenvalue of $L(G)$ is zero
with the vector of all ones as its eigenvector. It has multiplicity
one if and only if $G$ is connected. In other words, the second
smallest eigenvalue of $L(G)$ is positive if and only if $G$ is
connected. Viewing the second smallest eigenvalue as an algebraic measure of
connectivity, Fiedler termed this eigenvalue as the {\em algebraic
connectivity} of $G$, denoted $\mu(G)$. The following two lemmas are well known.

\begin{lemma}[\cite{fal3}, p.223]\label{lem:addvertex}
Let $G$ be a graph. Let $\widehat{G}$ be the graph obtained from $G$ by adding a pendant vertex to a vertex of $G.$ Then  $\mu(\widehat{G})\leq\mu(G)$.
\end{lemma}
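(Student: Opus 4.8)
The plan is to use the Rayleigh--Ritz characterization of the algebraic connectivity: for any graph $H$ on $m$ vertices,
\[
\mu(H)=\min\Bigl\{\tfrac{x^{T}L(H)x}{x^{T}x}:x\in\mathbb{R}^{m},\ x\neq0,\ x^{T}\mathbf{1}=0\Bigr\},\qquad x^{T}L(H)x=\sum_{\{i,j\}\in E(H)}(x_{i}-x_{j})^{2}.
\]
Write $V$ for the vertex set of $G$, with $|V|=n$, and suppose $\widehat{G}$ is obtained by joining a new vertex $v$ to a vertex $u\in V$. Fix an eigenvector $y$ of $L(G)$ for $\mu(G)$ with $y^{T}\mathbf{1}_{n}=0$ and $y\neq0$; being a nonzero vector orthogonal to the all-ones vector, $y$ is nonconstant.

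First I would build a test vector $\widehat{y}$ on $V\cup\{v\}$ by copying $y$ on $V$ and letting the new pendant vertex inherit the value of its neighbour: $\widehat{y}_{w}=y_{w}$ for $w\in V$ and $\widehat{y}_{v}=y_{u}$. With this choice the single new edge $\{u,v\}$ contributes $(\widehat{y}_{u}-\widehat{y}_{v})^{2}=0$ to the Laplacian quadratic form, so
\[
\widehat{y}^{T}L(\widehat{G})\widehat{y}=\sum_{\{i,j\}\in E(G)}(y_{i}-y_{j})^{2}=y^{T}L(G)y=\mu(G)\,y^{T}y.
\]
Because $\widehat{y}$ need not be orthogonal to $\mathbf{1}_{n+1}$ --- indeed $\widehat{y}^{T}\mathbf{1}_{n+1}=y^{T}\mathbf{1}_{n}+y_{u}=y_{u}$ --- I would then recentre it as $\widehat{z}=\widehat{y}-\tfrac{y_{u}}{n+1}\mathbf{1}_{n+1}$, so that $\widehat{z}^{T}\mathbf{1}_{n+1}=0$. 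Since $L(\widehat{G})\mathbf{1}_{n+1}=0$, recentring leaves the quadratic form unchanged, $\widehat{z}^{T}L(\widehat{G})\widehat{z}=\mu(G)\,y^{T}y$, while a direct computation gives $\widehat{z}^{T}\widehat{z}=y^{T}y+\tfrac{n}{n+1}y_{u}^{2}\geq y^{T}y$; moreover $\widehat{z}\neq0$, for otherwise $\widehat{y}$ would be constant and hence $y$ constant, a contradiction. Substituting $\widehat{z}$ into the Rayleigh quotient for $\widehat{G}$ and using $\mu(G)\geq0$ gives
\[
\mu(\widehat{G})\leq\frac{\widehat{z}^{T}L(\widehat{G})\widehat{z}}{\widehat{z}^{T}\widehat{z}}=\frac{\mu(G)\,y^{T}y}{y^{T}y+\tfrac{n}{n+1}y_{u}^{2}}\leq\mu(G),
\]
as required.

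The only real decision in the argument is the choice of test vector; once the new pendant vertex is assigned the eigenvector value of its neighbour the rest is bookkeeping, so I do not expect a serious obstacle --- the mild point to watch is verifying $\widehat{z}\neq0$ and keeping the normalization straight. If a purely matrix-theoretic proof is preferred, one can instead write $L(\widehat{G})=\bigl(L(G)\oplus0\bigr)+ww^{T}$ with $w=(e_{u},-1)^{T}$, observe that $L(G)\oplus0$ is the Laplacian of $G\cup K_{1}$ whose third smallest eigenvalue equals $\mu(G)$, and invoke the interlacing inequality for rank-one positive semidefinite perturbations to conclude $\mu(\widehat{G})=\lambda_{2}\bigl(L(\widehat{G})\bigr)\leq\mu(G)$.
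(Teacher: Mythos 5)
Your argument is correct, and it is worth noting that the paper itself offers no proof of this lemma at all: it is quoted from Fallat--Kirkland--Pati \cite{fal3} as a known fact, so there is no in-paper argument to compare against. Your Rayleigh--Ritz proof is the standard and most natural self-contained route: extending the Fiedler vector $y$ by giving the new pendant vertex the value $y_u$ of its neighbour makes the new edge contribute nothing to the quadratic form, and the recentred test vector $\widehat{z}$ satisfies $\widehat{z}^{T}L(\widehat{G})\widehat{z}=\mu(G)\,y^{T}y$ while $\widehat{z}^{T}\widehat{z}=y^{T}y+\tfrac{n}{n+1}y_u^{2}\geq y^{T}y$, so $\mu(\widehat{G})\leq\mu(G)$ since $\mu(G)\geq 0$. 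All the details check out, including the verification that $\widehat{z}\neq 0$ (a zero $\widehat{z}$ would force $y$ to be constant, contradicting $y\neq 0$ and $y^{T}\mathbf{1}=0$), and the argument even covers the disconnected case, where the conclusion is trivially $0\leq 0$. Your alternative sketch via writing $L(\widehat{G})=\bigl(L(G)\oplus 0\bigr)+ww^{T}$ and invoking interlacing for a rank-one positive semidefinite perturbation is also valid and slightly slicker, since $\lambda_2\bigl(L(\widehat{G})\bigr)\leq\lambda_3\bigl(L(G)\oplus 0\bigr)=\mu(G)$ immediately; either version would serve as a complete proof of the cited lemma.
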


\begin{lemma}[\cite{fi1}, 3.2, p.299]\label{lem:edge}
Let $G$ be a non-complete graph. Let $G'$ be the
graph obtained from $G$ by joining two non-adjacent vertices of
$G$ with an edge. Then $\mu(G)\leq\mu(G')$.
\end{lemma}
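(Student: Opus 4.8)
The plan is to derive the inequality from the edge-additivity of the Laplacian matrix together with the Courant--Fischer variational description of the algebraic connectivity. Write $u,v$ for the two non-adjacent vertices of $G$ that are joined by the new edge in $G'$, and let $e_u,e_v$ denote the corresponding standard basis vectors of $\mathbb{R}^n$. The first observation is the identity
\[
L(G') = L(G) + (e_u - e_v)(e_u - e_v)^{T},
\]
which holds because inserting the edge $uv$ raises the $(u,u)$ and $(v,v)$ diagonal entries by $1$ and turns the $(u,v)$ and $(v,u)$ entries into $-1$. In particular the correction matrix $(e_u-e_v)(e_u-e_v)^{T}$ is positive semidefinite, so $x^{T} L(G') x \ge x^{T} L(G) x$ for every $x \in \mathbb{R}^n$.

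Next I would recall that for any graph $H$ on the vertex set $V$ the algebraic connectivity satisfies
\[
\mu(H) = \min_{\substack{x \in \mathbb{R}^n,\ x \ne 0 \\ x \perp \mathbf{1}}} \frac{x^{T} L(H) x}{x^{T} x},
\]
where $\mathbf{1}$ is the all-ones vector; this is the standard consequence of the spectral theorem, using that $L(H)\mathbf{1}=0$ so that $\mathbf{1}$ lies in the eigenspace of the smallest eigenvalue $0$, whence the minimum of the Rayleigh quotient over the orthogonal complement of $\mathbf{1}$ equals the second smallest eigenvalue. Since $G$ and $G'$ share the vertex set $V$, the feasible set $\{x \ne 0 : x \perp \mathbf{1}\}$ in this minimization is exactly the same for both graphs.

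Combining the two ingredients, for every admissible $x$ one has $x^{T} L(G') x / x^{T} x \ge x^{T} L(G) x / x^{T} x \ge \mu(G)$, and taking the minimum over all such $x$ on the left-hand side gives $\mu(G') \ge \mu(G)$, as asserted. Equivalently, the final step can be phrased as an instance of Weyl's monotonicity theorem: if $M$ is symmetric and $N$ is symmetric positive semidefinite, then the $k$-th smallest eigenvalue of $M+N$ is at least that of $M$; applying this with $M = L(G)$, $N = (e_u - e_v)(e_u - e_v)^{T}$ and $k=2$ yields the claim at once.

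I do not anticipate a genuine obstacle here; the only point deserving a word of care is the justification that the constrained minimum of the Rayleigh quotient over $x \perp \mathbf{1}$ equals $\mu(H)$ even when $H$ is disconnected (so that the eigenvalue $0$ has multiplicity greater than one) --- but in that degenerate case $\mu(G)=0$ while $\mu(G')\ge 0$ trivially, so the inequality is immediate and no subtlety remains. One should also note in passing that $G'$ is again a simple graph, which is precisely why $u$ and $v$ are required to be non-adjacent in $G$, and that the non-completeness hypothesis merely guarantees that such a pair $u,v$ exists.
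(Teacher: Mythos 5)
Your proof is correct. The paper does not prove this lemma at all --- it is quoted from Fiedler's 1973 paper --- and your argument (the rank-one identity $L(G')=L(G)+(e_u-e_v)(e_u-e_v)^{T}$ combined with the Courant--Fischer characterization of $\mu$ over $x\perp\mathbf{1}$, or equivalently Weyl's monotonicity) is precisely the standard classical proof of that cited result, with the degenerate disconnected case and the role of non-completeness correctly accounted for.
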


Lemma \ref{lem:edge} implies that, over all connected graphs, the maximum algebraic connectivity occurs for the
complete graph and the minimum algebraic connectivity occurs for a
tree. It is also known that over all trees of order $n$, the path (denoted by $P_n$) has the minimum algebraic connectivity and this minimum is given by $2(1-\cos\frac{\pi}{n})$ (\cite{fi1}, p.304). Merris proved that, among all trees of order $n>2$, the star $K_{1,n-1}$ uniquely attains the maximum
algebraic connectivity which is equal to $1$ (\cite{mer2}, Corollary 2, p.118).

There is a good deal of work on the algebraic connectivity of graphs. We refer to \cite{bapat}--\cite{gr1},\cite{kir1},\cite{patra} for various works on the algebraic connectivity of connected graphs having certain graph theoretic properties. For all graph theoretic terms used in this paper (but not defined), the reader is advised to look at the book Graph Theory by Harary~\cite{harary}.
In this paper, we consider the problem of extremizing the algebraic connectivity over all connected graphs of order $n$ with $k$ pendant vertices for fixed $n$ and $k$, $0\leq k\leq n-1$.

The paper is arranged as follows:
In Section~\ref{preliminaries}, we recall some results from the literature that will be used in the subsequent sections. In Section~\ref{initial}, we record some elementary results and prove a result which gives conditions under which equality
is attained in Lemma \ref{lem:edge}. We study the algebraic connectivity of connected graphs with and without pendant vertices in Section~\ref{with} and Section~\ref{without}, respectively. Finally, in Section \ref{unicyclic}, we discuss the algebraic connectivity of unicyclic graphs.

\section{Preliminaries}\label{preliminaries}

Let $G$ be a connected graph. The {\em distance} $d(u,v)$ between two vertices $u$ and $v$ of $G$ is the length of a shortest path from $u$ to $v$. The {\em diameter} of $G$ is defined by $\max\limits_{u,v}d(u,v)$. A {\em pendant vertex} of $G$ is a vertex of degree one. For a set $W$ of vertices of $G$, $G-W$ denotes the graph obtained from $G$ by removing the vertices in $W$ and all edges incident with them. If $W=\{v\}$ consists of one vertex only, we simply denote $G-W$ by $G-v$ and refer to the connected components of $G-v$ as the {\em connected components of $G$ at $v$}. A vertex $v$ of $G$ is called a {\em cut-vertex} if there are at least two connected components of $G$ at $v$.

An eigenvector of $L(G)$ corresponding to $\mu(G)$ is called a {\em Fiedler vector} of $G$. Let $Y$ be a Fiedler vector of $G$. By $Y(v),$ we mean the
co-ordinate of $Y$ corresponding to the vertex $v.$ A vertex $v$
of $G$ is called a {\em characteristic vertex} with respect to $Y$ if $Y(v)=0$
and there exists a vertex $w$ adjacent to $v$ such that $Y(w)\neq
0.$ An edge $\{u, w\}$ of $G$ is called a {\em characteristic edge} with respect to $Y$
if $Y(u)Y(w)<0$. The {\em characteristic set} of $G$ with respect to $Y$, denoted ${\mathcal C}(G,Y)$, is the
set of all characteristic vertices and edges of $G$. By a result of
Fiedler \cite{fi3}, either ${\mathcal C}(G,Y)$ consists of one vertex only or ${\mathcal C}(G,Y)$ is
contained in a block of $G$. For a tree, it is known that
${\mathcal C}(G,Y)$ is independent of $Y$ and it contains either a vertex or an edge. We refer to \cite{bapat} for more general
results on the size of ${\mathcal C}(G,Y)$.

\begin{figure}[h]
\centering
\includegraphics[scale=.9]{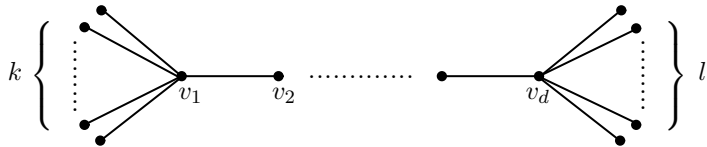}
\caption{The tree $T(k,l,d)$}\label{The tree T(k,l,d)}
\end{figure}

For a fixed positive integer $n,$ the path $ [v_1 v_2\cdots v_n]$ on $n$ vertices is denoted by $P_n$.
For positive integers $k,l,d$ with $n=k+l+d$, let $T(k,l,d)$ be the tree of order $n$ obtained by taking the
 path  $P_d$  and adding $k$ pendant vertices
adjacent to $v_1$ and $l$ pendant vertices adjacent to $v_d$
(see Figure \ref{The tree T(k,l,d)}). The path Note that $T(1,1,d]$ is a path on $d+2$ vertices.
The next proposition determines
the tree, up to isomorphism, which minimizes the algebraic connectivity
over all trees of order $n$ with fixed diameter.

\begin{proposition}[\cite{fal}, Theorem 3.2, p.58]\label{prop:diameter}
Among all trees of order $n$ with fixed diameter
$d+1$, the minimum algebraic connectivity is uniquely attained by $T\left(\lceil\frac{n-d}{2}\rceil,\lfloor\frac{n-d}{2}\rfloor,d\right)$.
\end{proposition}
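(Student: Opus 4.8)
The plan is to argue via the Perron-component and bottleneck-matrix theory for trees recalled in Section~\ref{preliminaries}: for a tree $T$, $\mu(T)$ is governed by the Perron value $\rho$ of the bottleneck matrix of a Perron branch at the characteristic set $\mathcal{C}$, so minimizing $\mu(T)$ amounts to maximizing this Perron value. The monotonicity I would rely on is the ``shifting'' principle — relocating a branch of $T$ so that it hangs farther from $\mathcal{C}$ (along the tree) does not increase $\mu$, and strictly decreases it unless the branch was already attached as far out as possible. The proof then proceeds in two stages: first reduce an arbitrary tree of order $n$ with diameter $d+1$ to a double broom $T(k,l,d)$, and then balance $k$ and $l$.

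\emph{Stage 1 (reduction to a double broom).} Let $T$ have order $n$ and diameter $d+1$; fix a diametral path $[u_0 u_1 \cdots u_{d+1}]$ and a Fiedler vector. Because this path is diametral, any branch hanging off an interior vertex $u_i$ has height at most $\min\{i,\,d+1-i\}$; in particular every branch at $u_1$ or at $u_d$ already consists of pendant vertices only. Now repeatedly apply the shifting principle, each time moving a branch not meeting $\mathcal{C}$ away from $\mathcal{C}$, toward the nearer end $u_0$ or $u_{d+1}$ of the diametral path. The height bound shows the diameter stays exactly $d+1$ at every step, and $\mu$ never increases; iterating pushes every off-path vertex out until it becomes a pendant at $u_1$ or at $u_d$. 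Hence $T$ is transformed into some $T(k,l,d)$ with $k+l=n-d$ and $k,l\ge 1$, with $\mu$ not increased, and with a strict decrease whenever $T$ was not already a double broom.

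\emph{Stage 2 (balancing).} It remains to prove that, over all $k,l\ge1$ with $k+l=n-d$ fixed, $\mu(T(k,l,d))$ strictly decreases each time a pendant is moved from the heavier broom to the lighter one, i.e. $\mu(T(k-1,l+1,d))<\mu(T(k,l,d))$ whenever $k\ge l+2$; together with the isomorphism $T(k,l,d)\cong T(l,k,d)$ this identifies the unique minimizer as $T\!\left(\lceil\frac{n-d}{2}\rceil,\lfloor\frac{n-d}{2}\rfloor,d\right)$. I would prove this comparison by examining the two branches at $\mathcal{C}$ in $T(k,l,d)$ and tracking the Perron value of the Perron branch as $k$ grows and $l$ shrinks with $k+l$ fixed: when $n-d$ is even, $T(k,k,d)$ has a characteristic vertex at the midpoint of $[u_1\cdots u_d]$ by symmetry and $\mu$ equals the reciprocal of the Perron value of one half, while for $n-d$ odd one works instead with the characteristic edge and the corresponding $2\times 2$-block expression.

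\emph{Main obstacle.} Stage 2 is the delicate part: ``moving a pendant across $\mathcal{C}$'' both changes which branch is the Perron branch and can shift the location of $\mathcal{C}$ itself, so monotonicity of $\rho$ in $k$ cannot be read off a single inequality; the cleanest route is probably an entrywise, Perron-vector-based comparison of the relevant bottleneck matrices rather than an explicit characteristic-polynomial computation. In Stage 1 the only points needing care are the bookkeeping that keeps the diameter equal to $d+1$ at each shift, and the (routine) verification that the shift is strict unless $T$ is already the double broom, which is exactly what delivers uniqueness.
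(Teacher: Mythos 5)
The paper does not actually prove Proposition~\ref{prop:diameter}; it is imported from [\cite{fal}, Theorem 3.2], so your attempt has to stand on its own, and as written it does not: both stages are left as monotonicity claims, and the Stage~1 claim fails as formulated. Your only elementary move in Stage~1 is to slide a branch along the diametral path $[u_0u_1\cdots u_{d+1}]$ toward the nearer end, and you assert that the height bound keeps the diameter equal to $d+1$ while iteration turns every off-path vertex into a pendant attached at $u_1$ or $u_d$. But sliding never changes the height of a branch, and the very bound you invoke is what stops the slide: a branch of height $h$ attached at $u_i$ can be moved no closer to $u_0$ than $u_h$, since at $u_j$ with $j<h$ its deepest vertex would be at distance $(d+1-j)+h>d+1$ from $u_{d+1}$. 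Hence any branch of height $h\ge 2$ stalls strictly inside the path and the process never reaches a double broom $T(k,l,d)$. To finish you would have to flatten such a branch, i.e.\ move its deep vertices toward the diametral path; that is movement \emph{toward} the characteristic set, exactly the direction in which your own shifting principle allows $\mu$ to increase, so it cannot be used in a minimization argument. Moreover, the shifting principle in the form you use it (graph distance to $\mathcal{C}$) is not among the tools available in the paper --- Proposition~\ref{prop:gkl} only transfers an edge between two pendant paths at a common vertex --- and the correct branch-moving lemmas in the literature are phrased via Fiedler-vector entries or entrywise domination of bottleneck matrices, not distance to $\mathcal{C}$; note also that $\mathcal{C}$ need not lie on the chosen diametral path and can relocate after each modification, so ``toward the nearer end of the path'' need not mean ``away from $\mathcal{C}$''.

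Stage~2 is a plan rather than a proof: the balancing inequality $\mu(T(k-1,l+1,d))<\mu(T(k,l,d))$ for $k\ge l+2$ is precisely the nontrivial content, and you only indicate that one ``would'' compare Perron values, while conceding yourself that both the Perron branch and the characteristic set move when a pendant is transferred; no comparison is actually carried out, and nothing quoted in the paper supplies it (Proposition~\ref{prop:gkl} changes the number of pendant vertices and the diameter, so it does not apply here). Uniqueness would further require strictness at every step of both stages, which is asserted but not argued. In short, the proposal reduces the proposition to two unproved monotonicity claims, one of which (the Stage~1 reduction to a double broom) fails as stated; the cited proof in \cite{fal} instead rests on a substantially more careful direct analysis of Perron values of bottleneck matrices, which is exactly the work your outline defers.
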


Now, consider a path $v_1 v_2\cdots v_{d}$ on $d\geq 3$ vertices and add $n-d$ pendant vertices adjacent to the vertex $v_j$, where $j=\lfloor\frac{d+1}{2}\rfloor$. The new graph thus constructed is denoted by $T_{n-d}^{d}$ (see Figure \ref{The tree T_{n-d}{d}}). Then $T_{n-d}^{d}$ is a tree of order $n$ with diameter $d-1$. The next proposition determines the tree, up to isomorphism, which maximizes the algebraic connectivity
over all trees of order $n$ with fixed diameter.
\begin{figure}[h]
\centering
\includegraphics[scale=.9]{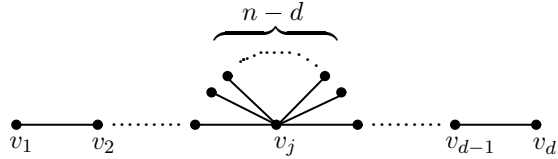}
\caption{The tree $T_{n-d}^{d}$}\label{The tree T_{n-d}{d}}
\end{figure}
\begin{proposition}[\cite{fal}, p.62--65]\label{prop:diameter1}
Among all trees of order $n$ with fixed diameter $d+1$, the maximum algebraic connectivity is uniquely attained by $T_{n-d-2}^{d+2}$.
\end{proposition}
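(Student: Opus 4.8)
The plan is to pin down the structure of a tree of order $n$ and diameter $d+1$ with maximum algebraic connectivity by a sequence of modifications that preserve both the order and the diameter and do not decrease $\mu$, and then to optimize over the resulting one-parameter family. So fix a maximizer $T$ and a diametral path $\mathcal P=[u_0 u_1\cdots u_{d+1}]$. Since $\mathcal P$ is diametral, $u_0$ and $u_{d+1}$ are pendant vertices of $T$ and every vertex outside $\mathcal P$ lies on a branch attached at one of the internal vertices $u_1,\dots,u_d$; moreover a branch at $u_i$ reaches depth at most $\min(i,d+1-i)$. In this notation the target tree $T_{n-d-2}^{d+2}$ is the \emph{broom} $B^{*}$ obtained by attaching all $n-d-2$ extra vertices as pendant vertices to the central vertex of $\mathcal P$, so the two things to prove are: (1) the maximum is attained at a broom $B_i$, meaning $\mathcal P$ with $n-d-2$ pendant vertices at $u_i$ for some $1\le i\le d$; and (2) among the $B_i$ it is attained only at the central position, that is at $B^{*}$.

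For (1) I would use the description of $\mu$ for trees via Perron components and bottleneck matrices (see~\cite{fal}). The first move is a \emph{flattening} step: if a branch of $T$ at $u_i$ contains a vertex at distance $\ge 2$ from $u_i$, replace that branch by the same number of pendant vertices at $u_i$; the depth bound above shows the diameter stays $d+1$, and since among rooted trees on a fixed number of vertices the bottleneck matrix has largest Perron value for the rooted path and smallest for the star rooted at its centre, this does not decrease $\mu$, and decreases it strictly unless the branch was already a star. The second move is a \emph{merging} step: after flattening, if pendant clusters hang at two distinct vertices of $\mathcal P$, one first checks that the characteristic set of such a tree lies on $\mathcal P$, and then moves the cluster farther from the characteristic set onto the nearer one; a bottleneck-matrix comparison again shows $\mu$ does not decrease, and increases strictly while the two clusters remain separate. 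Iterating these moves reduces $T$ to a broom $B_i$ with $\mu(T)\le\mu(B_i)$, and with equality only if $T$ was already a broom.

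For (2) one observes that for a broom $B_i$ the characteristic set again lies on $\mathcal P$, that the $n-d-2$ pendant vertices at $u_i$ are singleton components of bottleneck Perron value $1$, and that the two path-arms of $\mathcal P$, with $i$ and $d+1-i$ vertices, are the only candidate Perron components; hence $\mu(B_i)$ is controlled by the longer of these two pendant paths together with the position of $u_i$, and moving $u_i$ toward the centre of $\mathcal P$ strictly shortens the longer arm and strictly decreases the governing Perron value, so $\mu(B_i)$ is strictly maximized at the central position $B^{*}=T_{n-d-2}^{d+2}$. This step forces a split on the parity of $d$: for $d$ odd the path has a unique central vertex, $B^{*}$ is symmetric and of Type~I with characteristic vertex at the centre, and the comparison is transparent; for $d$ even there are two central vertices, $B^{*}$ is of Type~II with the central edge as characteristic set, and one must compare the two central positions and control the parametrized bottleneck matrices of the two arms.

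The step I expect to be the main obstacle is exactly this last optimization: making the Perron-value comparisons in the flattening, merging and centring arguments quantitative enough to give \emph{strict} inequalities in every configuration — which is what yields the uniqueness — while keeping track that every modification preserves the diameter $d+1$. The even-$d$ case, where $T_{n-d-2}^{d+2}$ is not symmetric and behaves as a Type~II tree, is the part most likely to require a separate and more careful analysis.
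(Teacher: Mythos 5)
The paper does not actually prove Proposition \ref{prop:diameter1}: it is quoted from \cite{fal} (pp.~62--65) and used as a known result, so your argument has to stand entirely on its own, and there is no in-paper proof to match it against.

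As written, your outline has a genuine gap, and it is not just a matter of making the estimates quantitative. The two monotonicity lemmas that carry all the weight (flattening a branch into pendant vertices, and merging pendant clusters towards the characteristic set) are only asserted, and the strictness you claim for them is in fact false. If the branch you flatten (or the cluster you move) hangs at a characteristic vertex and is not a Perron component there, the operation leaves $\mu$ unchanged: by Proposition \ref{prop:perron}, $\mu$ equals the reciprocal of the common Perron value of the (at least two) Perron components at that vertex, and nothing done inside the remaining components affects it so long as their Perron values stay below that of the two arms and the diameter is preserved. Concretely, take $n=9$ and diameter $6$ (so $d=5$): in $T_{2}^{7}$ (the path $v_1\cdots v_7$ with two pendant vertices at $v_4$) and in the tree obtained from $P_7$ by attaching the path $v_4w_1w_2$ at $v_4$, the components at $v_4$ include in both cases exactly two Perron components, each a path on three vertices with Perron value $\frac{1}{2(1-\cos\frac{\pi}{7})}$ (Remark \ref{rem:one}); hence both trees have $\mu=2\left(1-\cos\frac{\pi}{7}\right)$, which by the diameter bound of \cite{gr2} quoted before Theorem \ref{thm:treemax} is the maximum possible value for diameter $6$. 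So there are two non-isomorphic maximizers, and no refinement of your flattening/merging/centring scheme can deliver the strict inequalities needed for uniqueness, because they do not hold; the uniqueness assertion itself, as transcribed here, needs qualification (restrictions on parity of the diameter or on $n$, as in the careful case analysis of \cite{fal}; note the paper only invokes the proposition for diameter $3$, where uniqueness is fine, and compare the equality phenomena in Theorem \ref{thm:equality} and the $r=0$ case in the proof of Theorem \ref{thm:treemax}). The existence half of your plan (reduce to a broom by bottleneck-matrix domination at a vertex of the diametral path, then centre the cluster) is the right spirit and close to what is done in \cite{fal}, but each comparison needs an actual perturbation lemma, and the uniqueness part needs to be restated before it can be proved.
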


Fix a vertex $v$ of $G$ and let $l,k \geq 1$. We construct a new graph $G_{k,l}$ from $G$ by attaching two new paths $P:vv_{1}v_{2}\ldots v_{k}$ and $Q:vu_{1}u_{2}\ldots u_{l}$ at $v$ of
lengths $k$ and $l$, respectively, where
$u_{1},u_{2},\ldots,u_{l},v_{1},v_{2},\ldots,v_{k}$ are
distinct new vertices. Let $\widetilde{G}_{k,l}$ be the graph obtained
from $G_{k,l}$ by removing the edge $\{v_{k-1},v_{k}\}$ and adding
a new edge $\{u_{l},v_{k}\}$ (see Figure \ref{Grafting an edge}). We say that
$\widetilde{G}_{k,l}$ is obtained from $G_{k,l}$ by {\em grafting} an
edge. The next result compares the algebraic connectivity of $G_{k,l}$ with that of $\widetilde{G}_{k,l}\simeq G_{k-1,l+1}$ whenever the initial graph $G$ is a tree.

\begin{figure}[h]
\centering
\includegraphics[scale=.9]{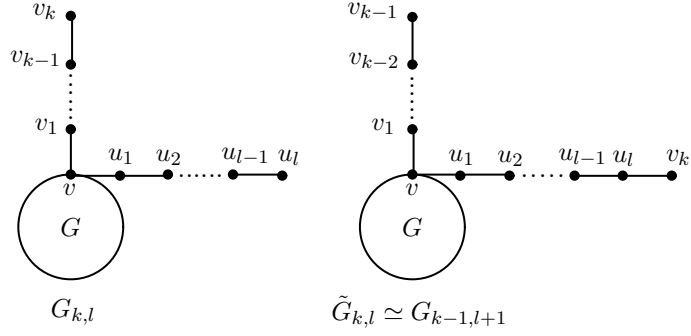}
\caption{Grafting an edge}\label{Grafting an edge}
\end{figure}

\begin{proposition}[\cite{patra}, Theorem 2.4, p.861]\label{prop:gkl}
Let $G$ be a tree of order $n\geq 2$ and $v$
be a vertex of $G$. For $l,k\geq 1$, let $G_{k,l}$ and $\widetilde{G}_{k,l}\simeq G_{k-1,l+1}$ be the graphs as defined above with respect to $v$. If $l\geq k$, then $\mu(G_{k-1,l+1})\leq\mu(G_{k,l})$.
\end{proposition}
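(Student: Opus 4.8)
The plan is to prove the equivalent inequality $\mu(G_{k-1,l+1})\le\mu(G_{k,l})$ by combining the variational characterisation of $\mu$ with the structure of the characteristic set. The key observation is that $G_{k,l}$ and $G_{k-1,l+1}\simeq\widetilde G_{k,l}$ have the same vertex set and agree on every edge except one: where $G_{k,l}$ has $\{v_{k-1},v_k\}$, the graph $G_{k-1,l+1}$ has $\{u_l,v_k\}$. Hence, for every vector $Z$ on the common vertex set,
\[
Z^{T}L(G_{k-1,l+1})Z=Z^{T}L(G_{k,l})Z+(Z_{u_l}-Z_{v_k})^{2}-(Z_{v_{k-1}}-Z_{v_k})^{2},
\]
so, with $\nu=\mu(G_{k,l})$, it is enough to exhibit a non-zero $Z\perp\mathbf 1$ with $Z^{T}L(G_{k-1,l+1})Z\le\nu\,\|Z\|^{2}$, since then $\mu(G_{k-1,l+1})\le\nu$. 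Starting from a Fiedler vector $Y$ of $G_{k,l}$, the aim is to manufacture such a $Z$ by altering $Y$ only near the grafted vertex $v_k$ and along the short pendant path $v v_1\cdots v_k$, while keeping $\mathbf 1^{T}Z=0$.

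First I would localise $\mathcal C(G_{k,l},Y)$. By Fiedler's theorem it is a single vertex or a single edge, and using the description of the characteristic set in terms of Perron components, together with the fact that the Perron value of the bottleneck matrix $[\min(i,j)]_{m\times m}$ of a pendant path on $m$ vertices is strictly increasing in $m$, the hypothesis $l\ge k$ forces $\mathcal C(G_{k,l},Y)$ off the short pendant path: were it, say, a vertex $v_i$ with $1\le i\le k-1$, the component on the $v_k$-side would be a path on $k-i$ vertices while the other component would carry a pendant path on $i+l$ vertices, and the equality of largest Perron values at a characteristic vertex would give $k-i\ge i+l$, i.e. $l\le k-2i<k$, contradicting $l\ge k$. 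So only two cases remain: \emph{(I)} $\mathcal C(G_{k,l},Y)$ lies on the long pendant path $v u_1\cdots u_l$, or at $v$; \emph{(II)} $\mathcal C(G_{k,l},Y)$ lies inside $G$. In either case $Y$ has a fixed sign along $v,v_1,\dots,v_k$, and by the standard monotonicity of $|Y|$ along a pendant path leading away from the characteristic set, $|Y_v|\le|Y_{v_1}|\le\cdots\le|Y_{v_k}|$.

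The test vector is then built according to the case. In Case (I) I would keep $Z=Y$ on $G$, on $u_1,\dots,u_l$ and on $v_1,\dots,v_{k-1}$, discard the value $Y_{v_k}$ at the (now redundant) tip of the short path, and assign to $v_k$ — pendant at $u_l$ in $G_{k-1,l+1}$ — the value obtained by propagating $Y$ one more step along the long path with spectral parameter $\nu$, while rescaling the short-path values so that $\mathbf 1^{T}Z=0$ persists. Substituting into the target inequality and simplifying reduces everything, after optimising the one free parameter, to a comparison between the growth of $|Y|$ along the long and the short pendant paths — quantitatively, an inequality between the Chebyshev-type quantities $f_k$ and $f_l$ defined by $f_0=1$, $f_1=1-\nu$, $f_{m+1}=(2-\nu)f_m-f_{m-1}$ that control pendant-path entries of $Y$ (indeed $Y_v=f_k(\nu)Y_{v_k}=f_l(\nu)Y_{u_l}$) — and this is precisely where $l\ge k$ is used. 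Case (II) is handled along the same lines, but because $\mathcal C(G_{k,l},Y)$ now lies in $G$ one must also track $Y$ near $v$ on the $G$-side; alternatively, in this case one can forgo test vectors and argue directly with Perron components, producing a vertex $z$ of $G_{k,l}$ every component of which at $z$ has bottleneck-Perron value at most $1/\mu(G_{k-1,l+1})$, whence $\mu(G_{k,l})\ge\mu(G_{k-1,l+1})$.

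The part I expect to be hardest is the quantitative estimate in Case (I). The obvious choice $Z=Y$ fails exactly when the new edge $\{u_l,v_k\}$ straddles a sign change of $Y$, making $(Y_{u_l}-Y_{v_k})^2$ large while $(Y_{v_{k-1}}-Y_{v_k})^2$ stays small; controlling this loss requires both the reshaping above and the monotonicity input that $l\ge k$ provides, and verifying that $\nu=\mu(G_{k,l})$ lies in the range where the needed inequality between $f_k$ and $f_l$ holds. The remaining care concerns treating the vertex and edge sub-cases of $\mathcal C(G_{k,l},Y)$ uniformly and dealing with the boundary case $k=1$, where the short pendant path is empty and $G_{k-1,l+1}=G_{0,l+1}$ is just $G$ with a single pendant path of length $l+1$ attached at $v$.
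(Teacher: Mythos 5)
You should first note that the paper itself contains no proof of this statement: Proposition~\ref{prop:gkl} is quoted from \cite{patra} (Theorem 2.4 there), whose argument runs through bottleneck matrices and Perron components at the vertex $v$ and at the characteristic set, so there is no internal proof to match your Rayleigh-quotient strategy against. Judged on its own, your text is a plan rather than a proof, and the gap sits exactly at the theorem's core. In Case (I) you never construct the test vector $Z$ explicitly, never verify $\mathbf{1}^{T}Z=0$ after the ``rescaling'', and never carry out the computation showing $Z^{T}L(G_{k-1,l+1})Z\le \nu\|Z\|^{2}$; you only assert that after ``optimising the one free parameter'' everything reduces to an inequality between $f_k(\nu)$ and $f_l(\nu)$, and that this inequality holds for $\nu=\mu(G_{k,l})$ because $l\ge k$. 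That reduction and that inequality (including checking that $\nu$ lies in the range where the $f_i$ have the signs you need --- when the characteristic set lies strictly inside the long path, $f_l(\nu)$ changes sign and the relation $Y_v=f_l(\nu)Y_{u_l}$ no longer gives monotone growth) are precisely where the difficulty of the grafting lemma lives; deferring them, as you do explicitly, leaves the theorem unproved. The danger you identify yourself is real: when $\{u_l, v_k\}$ straddles the characteristic edge, $(Y_{u_l}-Y_{v_k})^{2}$ can exceed $(Y_{v_{k-1}}-Y_{v_k})^{2}$ by an amount comparable to $\nu\|Y\|^{2}$, and your recipe of ``propagating one more step and rescaling the short-path values'' perturbs both the quadratic form and the norm simultaneously, with no estimate offered for the net effect.

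Case (II) has the same character. The implication you invoke is sound (if every component of $G_{k,l}$ at some vertex $z$ has bottleneck Perron value at most $1/\mu(G_{k-1,l+1})$, then the Perron component does too, and $\mu(G_{k,l})\ge \mu(G_{k-1,l+1})$ follows), but you give no indication of which vertex $z$ to take or why the required bound on its components holds; the natural candidate $z=v$ requires comparing the Perron values of the branches $P_k\cup P_l$ versus $P_{k-1}\cup P_{l+1}$ against $1/\mu(G_{k-1,l+1})$, which again is the substance of the argument, not a routine verification. The parts you do carry out --- ruling the characteristic set off the short path via the monotonicity of the Perron value of the path bottleneck matrix $[\min(i,j)]$, and the recurrence $Y_v=f_k(\nu)Y_{v_k}=f_l(\nu)Y_{u_l}$ --- are correct in outline, and the boundary case $k=1$ is at least flagged, but as it stands the proposal establishes the easy localisation step and leaves both decisive estimates unproved.
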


Let $v$ be a vertex of $G$ and $C_1, C_2,
\cdots, C_k$ be the connected components of $G$ at $v$. Note that $k\geq2$ if and only if $v$ is a cut-vertex of $G$. For each $i=1,\cdots,k$, we denote by ${\widehat L}(C_i)$ the principal sub-matrix of $L(G)$ corresponding to the
vertices of $C_i$. Since $L(G)$ is an $M$-matrix and has nullity $1$, it follows that ${\widehat L}(C_i)$ is a non-singular $M$-matrix and hence ${\widehat L}(C_i)^{-1}$ is a positive matrix, called the {\em bottleneck matrix} of $C_i$. By the Perron-Frobenius Theorem \cite{minc}, ${\widehat L}(C_i)^{-1}$ has a simple positive dominant eigenvalue,
called the {\em Perron value of $C_i$} and is denoted by $\rho({\widehat L}(C_i)^{-1})$. A corresponding eigenvector
of $\rho({\widehat L}(C_i)^{-1})$ with all entries positive is called a {\em Perron vector of $C_i$}. We say that $C_i$ is a {\em Perron component of $G$ at $v$} if its Perron value is maximum among that the Perron values of $C_1, \cdots, C_k$.

The next proposition gives the description of the entries of the bottleneck matrices of a tree.

\begin{proposition}[\cite{kir2}, Proposition 1, p.313]\label{prop:bottleneck}
Let $T$ be a tree, $v$ be a vertex of $T$ and $C$ be a connected component of $T$ at $v$. Then $\widehat{L}(C)^{-1}=[m_{ij}]$,
where $m_{ij}$ is the number of common edges between the paths $P_{iv}$ joining $i$ and $v$ and $P_{jv}$ joining $j$ and $v.$
\end{proposition}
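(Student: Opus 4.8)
The plan is to verify directly that the matrix $M=[m_{ij}]$ of the statement is a right inverse of $\widehat{L}(C)$; since $\widehat{L}(C)$ is a non-singular $M$-matrix, this will identify $M$ with $\widehat{L}(C)^{-1}$. First I would set up notation: form the subtree $\widehat{C}$ of $T$ on the vertex set $V(C)\cup\{v\}$ and root it at $v$. Each edge $e$ of $\widehat{C}$ --- equivalently, each edge of $T$ having an endpoint in $C$ --- when deleted from $T$ separates $v$ from a set $S_e\subseteq V(C)$, namely the branch of $\widehat{C}$ hanging below the endpoint of $e$ that lies farther from $v$; let $\chi_e\in\mathbb{R}^{V(C)}$ be the indicator vector of $S_e$ (entries $0$ and $1$). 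The combinatorial reformulation is then: a vertex $i\in V(C)$ lies in $S_e$ precisely when $e$ occurs on the path $P_{iv}$, so the number of common edges of $P_{iv}$ and $P_{jv}$ equals $\#\{e:i\in S_e,\ j\in S_e\}$; that is,
\[ M=\sum_{e\in E(\widehat{C})}\chi_e\chi_e^{T}, \qquad\text{and in particular}\qquad Me_j=\sum_{e\in E(P_{jv})}\chi_e . \]

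The key step --- and the one place where real work is needed --- is to show that
\[ \widehat{L}(C)\,\chi_e=e_b-e_a , \]
where $e=\{a,b\}$ with $a$ the endpoint of $e$ nearer to $v$ and $b$ the one farther, under the convention that $e_a$ is the zero vector when $a=v$. This is a coordinate-by-coordinate check: at any vertex $k$ strictly inside $S_e$, and at any vertex $k\notin S_e$ other than $a$, the diagonal (degree) entry of $\widehat{L}(C)$ cancels the off-diagonal ($-1$) entries, so the $k$-th coordinate is $0$; only the two endpoints of $e$ survive, contributing $+1$ at $b$ and $-1$ at $a$. The subtle point is the case $a=v$, i.e.\ when $b$ is the vertex of $C$ adjacent to $v$: there $\widehat{L}(C)$ is a principal submatrix of $L(T)$ and not the Laplacian of the tree $C$, so it carries an extra $+1$ on the diagonal at $b$, and it is precisely this surplus that produces the coordinate $+1$ at $b$ (in accordance with the convention $e_v=0$). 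This is the step I expect to be the main obstacle; everything else is bookkeeping with paths in a tree.

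Granting the identity, the proof finishes by telescoping along a path. Fix $j\in V(C)$ and write $P_{jv}$ as $j=b_0,b_1,\dots,b_r=v$ with each $b_t$ nearer to $v$ than $b_{t-1}$, so that its edges are $\{b_{t-1},b_t\}$, the far endpoint of the $t$-th edge being $b_{t-1}$. Then
\[ \widehat{L}(C)\,Me_j=\sum_{t=1}^{r}\widehat{L}(C)\,\chi_{\{b_{t-1},b_t\}}=\sum_{t=1}^{r}\bigl(e_{b_{t-1}}-e_{b_t}\bigr)=e_{b_0}-e_{b_r}=e_j , \]
since the last summand, indexed by $b_r=v$, vanishes. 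As $j$ was arbitrary, $\widehat{L}(C)\,M=I$, hence $M=\widehat{L}(C)^{-1}$. (Alternatively one could induct on $|V(C)|$, deleting a pendant vertex of $C$ at maximum distance from $v$ and updating the bottleneck matrix via a Schur-complement identity, but the direct verification above is shorter and makes the combinatorial content transparent.)
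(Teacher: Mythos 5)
Your argument is correct, but there is nothing in the paper to compare it with: the paper does not prove this proposition at all, it simply imports it from Kirkland--Neumann--Shader \cite{kir2}, so any proof you give is necessarily ``different from the paper's.'' Your verification is sound as written. The decomposition $M=\sum_{e}\chi_e\chi_e^{T}$ is the right combinatorial reformulation of ``number of common edges of $P_{iv}$ and $P_{jv}$,'' and the identity $\widehat{L}(C)\chi_e=e_b-e_a$ (with $e_v=0$) checks out coordinate-by-coordinate exactly as you indicate: the only crossing edge between $S_e$ and its complement in the subtree on $V(C)\cup\{v\}$ is $e$ itself, interior vertices of $S_e$ and vertices outside $S_e$ other than $a$ cancel, and when $a=v$ the surplus $+1$ on the diagonal at the unique neighbour of $v$ in $C$ (coming from $\widehat{L}(C)$ being a principal submatrix of $L(T)$ rather than the Laplacian of $C$) is precisely what makes the $b$-coordinate equal to $1$. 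The telescoping sum then gives $\widehat{L}(C)M=I$, and for square matrices a one-sided inverse is the inverse, so you do not even need to invoke the $M$-matrix theory you mention. Compared with the route in \cite{kir2} (which works with the structure of bottleneck matrices for weighted trees), your rank-one decomposition has two pleasant by-products: it exhibits $\widehat{L}(C)^{-1}$ as a sum of positive semidefinite rank-one terms, and it generalizes verbatim to weighted trees by replacing each summand $\chi_e\chi_e^{T}$ with $w(e)^{-1}\chi_e\chi_e^{T}$, which is the form actually needed in the weighted setting of \cite{kir1,kir2}.
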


The following proposition connecting Perron components, bottleneck matrices and
algebraic connectivity recasts some of the results obtained in \cite{fal,kir1}.
Throughout the paper, we denote by $I$ the identity
matrix and by $J$ the matrix of all ones of appropriate orders. For a symmetric
matrix $M$, $\lambda(M)$ denotes its largest eigenvalue.

\begin{proposition}\label{prop:alg}
Let $G$ be a connected graph with a cut-vertex $v$. Let $C_1,C_2,\cdots,C_k$ be the connected components of $G$ at $v$
with $C_1$ as a Perron component. Then the following results hold.
\begin{enumerate}
\item[$(i)$] There is a unique
non-negative number $x$ such that
$$\lambda\left({\widehat L}(C_1)^{-1}-xJ\right)=\rho\left({\widehat
L}(C_2)^{-1}\oplus\cdots\oplus{\widehat L}(C_k)^{-1}\oplus
[0]+xJ\right)=\frac{1}{\mu(G)}.$$

\item[$(ii)$] If there exists a non-negative number $x$ such that
$$\lambda\left({\widehat L}(C_1)^{-1}-xJ\right)=\rho\left({\widehat
L}(C_2)^{-1}\oplus\cdots\oplus{\widehat L}(C_k)^{-1}\oplus
[0]+xJ\right)=\frac{1}{\alpha},$$ then $\alpha$ is an eigenvalue of $L(G)$.
\end{enumerate}
\end{proposition}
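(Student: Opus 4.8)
The plan is to analyze the eigenvalue equation $L(G)Y = \mu Y$ directly, exploiting the block structure of $L(G)$ induced by the cut-vertex $v$. Order the vertices of $G$ so that $v$ comes last and the preceding blocks correspond to $C_1, C_2, \ldots, C_k$. Then $L(G)$ has the form
\[
L(G) = \begin{bmatrix} \widehat{L}(C_1) & & & -e_1 \\ & \ddots & & \vdots \\ & & \widehat{L}(C_k) & -e_k \\ -e_1^T & \cdots & -e_k^T & d \end{bmatrix},
\]
where $e_i$ is the vector (supported on the vertices of $C_i$ adjacent to $v$) recording the edges from $C_i$ to $v$, and $d = \deg_G(v)$. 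Writing a putative Fiedler-type eigenvector as $Y = (Y_1, \ldots, Y_k, y_v)^T$ with $Y_i$ the part on $C_i$, the eigenvalue equation $L(G)Y = \mu Y$ splits into the blocks $\widehat{L}(C_i) Y_i - y_v e_i = \mu Y_i$ for $i = 1, \ldots, k$, together with the scalar equation at $v$.

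First I would treat the case $y_v \neq 0$, normalizing $y_v = 1$. For each $i$, the block equation gives $(\widehat{L}(C_i) - \mu I) Y_i = e_i$, so $Y_i = (\widehat{L}(C_i) - \mu I)^{-1} e_i$ provided $\mu$ is not an eigenvalue of $\widehat{L}(C_i)$; one then checks that the scalar equation at $v$ becomes a single scalar identity in $\mu$. The standard device (this is how the results of \cite{fal,kir1} are proved) is to reparametrize: one shows that the condition "$\mu$ is an eigenvalue of $L(G)$ with $y_v \neq 0$" is equivalent to the existence of $x \geq 0$ with $\lambda(\widehat{L}(C_1)^{-1} - xJ) = \tfrac{1}{\mu}$ and $\rho(\widehat{L}(C_2)^{-1} \oplus \cdots \oplus \widehat{L}(C_k)^{-1} \oplus [0] + xJ) = \tfrac{1}{\mu}$, where $x$ plays the role of (essentially) the reciprocal of $y_v$ times a Perron-vector-length quantity; the $J$ terms arise because $e_i e_i^T$-type rank-one corrections, after inverting and using the identity $\widehat{L}(C_i)^{-1} e_i = \mathbf{1}$ (the all-ones vector on $C_i$), turn into rank-one corrections by $J$. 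Concretely, the Perron component $C_1$ supplies the $-xJ$ perturbation and the remaining components, bundled with the $[0]$ block at $v$, supply the $+xJ$ perturbation; both perturbed matrices are then forced to share the common dominant eigenvalue $\tfrac{1}{\mu}$.

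For part $(ii)$ I would run this computation in reverse: given $x \geq 0$ achieving the common value $\tfrac{1}{\alpha}$, let $w_1$ be a Perron vector of $\widehat{L}(C_1)^{-1} - xJ$ and $w$ a Perron vector of the bundled matrix $\widehat{L}(C_2)^{-1} \oplus \cdots \oplus \widehat{L}(C_k)^{-1} \oplus [0] + xJ$, scale them compatibly, and assemble a vector $Y$ on $V(G)$ whose restriction to $C_1$ is (a multiple of) $w_1$, whose restriction to $C_2 \cup \cdots \cup C_k$ together with its value at $v$ comes from $w$; then a direct substitution verifies $L(G)Y = \alpha Y$, using $\widehat{L}(C_i)^{-1} e_i = \mathbf{1}$ at each step to reconcile the $J$-corrections with the actual off-diagonal $-e_i$ blocks of $L(G)$. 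For part $(i)$, existence and uniqueness of $x$ follow from a monotonicity/intermediate-value argument: as $x$ increases from $0$, the function $x \mapsto \lambda(\widehat{L}(C_1)^{-1} - xJ)$ is continuous and strictly decreasing (rank-one downward perturbation on the Perron eigenvalue), while $x \mapsto \rho(\cdots + xJ)$ is continuous and strictly increasing, so the two graphs cross exactly once; that $C_1$ is a \emph{Perron} component guarantees the crossing happens at $x \geq 0$ rather than requiring negative $x$. Finally, one invokes part $(ii)$ to conclude that the common value's reciprocal is an eigenvalue of $L(G)$, and a short separate argument (e.g., that any eigenvector built this way is orthogonal to $\mathbf{1}$ and that no eigenvalue of $L(G)$ smaller than this one can exist, using that $\widehat{L}(C_i)^{-1}$ controls the smallest positive eigenvalue) identifies it as $\mu(G)$ itself.

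The main obstacle I anticipate is bookkeeping the rank-one corrections cleanly: making precise exactly how $x$ relates to $y_v$ and the Perron vectors, and verifying that the identity $\widehat{L}(C)^{-1} e = \mathbf{1}$ (which holds because the row sums of $\widehat{L}(C)$ are exactly $e$) converts every $-e_i$ coupling block into the stated $J$-perturbations without sign errors or scaling mismatches. A secondary subtlety in part $(i)$ is handling the possibility that $\tfrac{1}{\mu}$ coincides with $\rho(\widehat{L}(C_1)^{-1})$ at $x=0$ (the case where the characteristic set is a vertex at $v$, forcing $y_v = 0$); there one takes $x = 0$ and checks the degenerate equalities directly, and must confirm the "unique $x$" claim still holds by noting strict monotonicity on the relevant side.
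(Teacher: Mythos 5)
The paper does not prove this proposition at all: it is stated explicitly as a recasting of results from \cite{fal,kir1} (Kirkland--Fallat's Perron-component machinery), so there is no internal argument to compare yours against. Your outline does follow the strategy of those references in spirit --- block decomposition of $L(G)$ at the cut-vertex, the identity $\widehat{L}(C_i)\mathbf{1}=e_i$ (hence $\widehat{L}(C_i)^{-1}e_i=\mathbf{1}$) to convert the coupling blocks into $J$-perturbations, and a monotonicity/crossing argument in $x$, with the Perron hypothesis on $C_1$ guaranteeing $\lambda(\widehat{L}(C_1)^{-1})\geq\rho(\widehat{L}(C_2)^{-1}\oplus\cdots\oplus[0])$ at $x=0$ so that the crossing occurs at $x\geq 0$.

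However, as written there is a genuine gap at the crux of part $(i)$. Your crossing argument plus part $(ii)$ only show that the common value at the crossing is the reciprocal of \emph{some} eigenvalue of $L(G)$, hence of an eigenvalue $\alpha\geq\mu(G)$; the whole content of $(i)$ is that this value is exactly $1/\mu(G)$, and your ``short separate argument'' (orthogonality to $\mathbf{1}$ plus the assertion that no smaller eigenvalue can exist because ``$\widehat{L}(C_i)^{-1}$ controls the smallest positive eigenvalue'') is not a proof --- an eigenvector orthogonal to $\mathbf{1}$ only certifies an eigenvalue at least $\mu(G)$, not equal to it. The missing step is the forward direction: starting from an actual Fiedler vector $Y$ of $G$, one must use the structure results on Perron components and the characteristic set (Fiedler's sign-structure theorem, or equivalently Proposition~\ref{prop:perron}: either $v$ is a characteristic vertex, forcing $Y(v)=0$ and $x=0$, or the unique Perron component $C_1$ contains the characteristic set, in which case the restrictions of $Y$ to $C_2,\ldots,C_k$ and to $v$ satisfy the eigen-equation for $\widehat{L}(C_2)^{-1}\oplus\cdots\oplus[0]+xJ$ with an explicit $x\geq 0$ built from $Y(v)$ and $\mu(G)$) to exhibit an admissible $x$ at the value $1/\mu(G)$; uniqueness of the crossing then identifies it with your constructed $x$. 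Without this the case analysis on $Y(v)$ that you sketch is only a statement of what must be proved. Two smaller points: the claim that $x\mapsto\lambda(\widehat{L}(C_1)^{-1}-xJ)$ is \emph{strictly} decreasing is an overstatement (it is convex and non-increasing, and can level off at the largest eigenvalue of the compression to $\mathbf{1}^{\perp}$; uniqueness of the crossing should instead be drawn from the strict increase of the other function), and in part $(ii)$ the case $x=0$ needs separate care since the bundled matrix is then reducible and its Perron vector need not be positive, so the assembly and scaling of the eigenvector must be checked in that degenerate case as well.
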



The next result is a special case of Proposition \ref{prop:alg}$(i)$.

\begin{proposition}[\cite{kir1}, Theorem 8, p.146]\label{prop:gamma}
Let $G$ be a connected graph and $\{v_i,v_j\}$ be an edge not on any cycle of $G$. Let $C_i$ be the connected component of $G$ at $v_j$ containing $v_i$
and $C_j$ be the connected component of $G$ at $v_i$ containing $v_j$. Then $C_i$
is the Perron component of $G$ at $v_j$ and $C_j$ is the Perron component of $G$
at $v_i$ if and only if there exists a $\gamma \in (0,1)$ such
that
$$\rho\left({\widehat L}(C_i)^{-1}-\gamma J\right)=\rho\left({\widehat L}(C_j)^{-1}-(1-\gamma)J\right)=\frac{1}{\mu(G)}.$$
In that case, $\mu(G)$ is a simple eigenvalue of $L(G)$ and the characteristic
set of $G$ is a singleton consisting of the edge $\{v_i,v_j\}$.
\end{proposition}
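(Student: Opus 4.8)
The plan is to deduce this from Proposition~\ref{prop:alg}(i), applied at the cut-vertex $v_j$ (or, symmetrically, at $v_i$). Since $\{v_i,v_j\}$ lies on no cycle it is a bridge, so $V(G)=V(C_i)\sqcup V(C_j)$, the only edge joining the two sides is $\{v_i,v_j\}$, and $\widehat{L}(C_i)=L(C_i)+e_{v_i}e_{v_i}^{T}$, where $L(C_i)$ is the Laplacian of $C_i$ taken as a graph on its own and $e_{v_i}$ is the coordinate vector of $v_i$; hence $\widehat{L}(C_i)\mathbf{1}=e_{v_i}$, $\widehat{L}(C_i)^{-1}e_{v_i}=\mathbf{1}$ and $(\widehat{L}(C_i)^{-1})_{v_iv_i}=1$, and the same holds for $C_j$. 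The key structural input is a decomposition of the bottleneck matrix of $C_j$: if $D_1,\dots,D_s$ are the components of $G$ at $v_j$ other than $C_i$, then, ordering $V(C_j)$ as $V(D_1),\dots,V(D_s),v_j$,
\[
\widehat{L}(C_j)^{-1}=\Bigl(\bigoplus_{t=1}^{s}\widehat{L}(D_t)^{-1}\Bigr)\oplus[0]+J ,
\]
which can be cited from the bottleneck calculus of \cite{kir1,kir2} or verified directly by multiplying on the left by $\widehat{L}(C_j)$ and using $\widehat{L}(D_t)\mathbf{1}=-b_t$, where $b_t$ lists the $-1$'s in the $v_j$-row of $\widehat{L}(C_j)$ over $V(D_t)$. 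Its two consequences are: the term $\rho\bigl(\bigoplus_t\widehat{L}(D_t)^{-1}\oplus[0]+xJ\bigr)$ occurring in Proposition~\ref{prop:alg}(i) at $v_j$ equals $\rho\bigl(\widehat{L}(C_j)^{-1}-(1-x)J\bigr)$; and, applying the identity at $v_i$ with $E_1,\dots,E_m$ the components at $v_i$ other than $C_j$, $\widehat{L}(C_i)^{-1}-J=\bigoplus_r\widehat{L}(E_r)^{-1}\oplus[0]$.

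Next I would record a positivity lemma. For $0\le x<1$, Sherman--Morrison together with $\widehat{L}(C_i)\mathbf{1}=e_{v_i}$ gives $(\widehat{L}(C_i)^{-1}-xJ)^{-1}=L(C_i)+\tfrac1{1-x}e_{v_i}e_{v_i}^{T}$, a nonsingular irreducible $M$-matrix, so $\widehat{L}(C_i)^{-1}-xJ$ is entrywise positive, $\lambda(\widehat{L}(C_i)^{-1}-xJ)=\rho(\widehat{L}(C_i)^{-1}-xJ)$, and this quantity strictly decreases in $x$ on $[0,1)$. Also $\widehat{L}(C_i)\succeq e_{v_i}e_{v_i}^{T}=\widehat{L}(C_i)\,J\,\widehat{L}(C_i)$, so conjugating by $\widehat{L}(C_i)^{-1}$ gives $\widehat{L}(C_i)^{-1}\succeq J$; hence $\widehat{L}(C_i)^{-1}-J\succeq0$, and $f(x):=\lambda(\widehat{L}(C_i)^{-1}-xJ)=\rho(\widehat{L}(C_i)^{-1}-xJ)$ in fact holds on all of $[0,1]$, the analogous facts holding for $C_j$. (The case $G=K_2$ is trivial, with $\gamma=\tfrac12$; if exactly one of $v_i,v_j$ is a cut-vertex, run what follows at that vertex.)

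For the forward implication, assume $C_i$ is the \emph{unique} Perron component at $v_j$ and $C_j$ the \emph{unique} Perron component at $v_i$, with $v_j$ a cut-vertex. Proposition~\ref{prop:alg}(i) with $C_1=C_i$, rewritten through the decomposition identity, yields a unique $x\ge0$ with $f(x)=g(x)=1/\mu(G)$, where $g(x):=\rho\bigl(\widehat{L}(C_j)^{-1}-(1-x)J\bigr)=\rho\bigl(\bigoplus_t\widehat{L}(D_t)^{-1}\oplus[0]+xJ\bigr)$ is continuous and strictly increasing while $f$ is decreasing; so $f-g$ is continuous and strictly decreasing, and its unique zero is that $x$. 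Now $f(0)=\rho(\widehat{L}(C_i)^{-1})>\max_t\rho(\widehat{L}(D_t)^{-1})=g(0)$ exactly because $C_i$ is the \emph{unique} Perron component at $v_j$, whereas $f(1)=\rho(\widehat{L}(C_i)^{-1}-J)=\max_r\rho(\widehat{L}(E_r)^{-1})<\rho(\widehat{L}(C_j)^{-1})=g(1)$ exactly because $C_j$ is the \emph{unique} Perron component at $v_i$. Hence the zero of $f-g$ lies in $(0,1)$; taking $\gamma:=x$ and replacing $\lambda$ by $\rho$ on the $C_i$-side (legitimate since $\gamma\in(0,1)$) gives the displayed equalities.

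For the converse, let such a $\gamma\in(0,1)$ be given. If $C_i$ were not the unique Perron component at $v_j$ then $g(0)\ge f(0)$, hence $g(\gamma)>f(\gamma)$, contradicting $f(\gamma)=g(\gamma)=1/\mu(G)$; so $C_i$ is the unique Perron component at $v_j$, and symmetrically $C_j$ at $v_i$. For the remaining assertions, let $y>0$, $z>0$ be Perron vectors of $\widehat{L}(C_i)^{-1}-\gamma J$ and $\widehat{L}(C_j)^{-1}-(1-\gamma)J$ for the eigenvalue $1/\mu(G)$; applying $e_{v_i}^{T}$ (resp.\ $e_{v_j}^{T}$) to these eigen-equations and using $\widehat{L}(C_i)^{-1}e_{v_i}=\mathbf{1}$, $\widehat{L}(C_j)^{-1}e_{v_j}=\mathbf{1}$ gives $y(v_i)=\mu(G)(1-\gamma)(\mathbf{1}^{T}y)$ and $z(v_j)=\mu(G)\gamma(\mathbf{1}^{T}z)$, which is precisely the compatibility making the vector $Y$ that equals $(\mathbf{1}^{T}z)\,y$ on $V(C_i)$ and $-(\mathbf{1}^{T}y)\,z$ on $V(C_j)$ satisfy $L(G)Y=\mu(G)Y$ (check the two block rows using $\widehat{L}(C_i)\mathbf{1}=e_{v_i}$, $\widehat{L}(C_j)\mathbf{1}=e_{v_j}$, and that $\{v_i,v_j\}$ is the only cross-edge). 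This $Y$ is a Fiedler vector, positive on $C_i$ and negative on $C_j$, so ${\mathcal C}(G,Y)=\{\{v_i,v_j\}\}$; and simplicity follows by first excluding Fiedler vectors that vanish identically on $V(C_i)$ or on $V(C_j)$ — such a $W$ would make $W|_{C_i}$ an eigenvector of $L(C_i)$ for $\mu(G)$ vanishing at $v_i$, whence $\mu(G)\ge\lambda_{\min}$ of $L(C_i)$ restricted to vectors vanishing at $v_i$, while Sherman--Morrison gives $\mu(G)=\lambda_{\min}\bigl(L(C_i)+\tfrac1{1-\gamma}e_{v_i}e_{v_i}^{T}\bigr)$, which is strictly smaller — whence $\mu(G)$ is an eigenvalue of neither $\widehat{L}(C_i)$ nor $\widehat{L}(C_j)$, every Fiedler vector $W$ equals $W(v_j)(\widehat{L}(C_i)-\mu(G)I)^{-1}e_{v_i}$ on $V(C_i)$ and $W(v_i)(\widehat{L}(C_j)-\mu(G)I)^{-1}e_{v_j}$ on $V(C_j)$, and so is determined up to a common scalar by $(W(v_i),W(v_j))$. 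The two points I expect to require the most care are placing the crossing point \emph{strictly} inside $(0,1)$ (where uniqueness of the Perron components and the relation $\widehat{L}(C_i)^{-1}\succeq J$ are indispensable) and the simplicity argument, which needs the exclusion of one-sided Fiedler vectors before the block-wise reconstruction of the eigenvectors can be invoked.
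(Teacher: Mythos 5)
The paper does not prove this proposition at all: it is quoted from Kirkland--Fallat (\cite{kir1}, Theorem 8) as a preliminary, so there is no internal proof to compare against. Judged on its own, your derivation is correct and is in the spirit of the original source: the bridge hypothesis gives $V(G)=V(C_i)\sqcup V(C_j)$ with $\widehat{L}(C_i)=L(C_i)+e_{v_i}e_{v_i}^{T}$, the block identity $\widehat{L}(C_j)^{-1}=\bigl(\bigoplus_t\widehat{L}(D_t)^{-1}\bigr)\oplus[0]+J$ converts Proposition~\ref{prop:alg}$(i)$ at $v_j$ into the one-parameter family $\rho(\widehat{L}(C_i)^{-1}-xJ)$ versus $\rho(\widehat{L}(C_j)^{-1}-(1-x)J)$, and your endpoint comparisons at $x=0$ and $x=1$ (which is exactly where uniqueness of the Perron components enters, and where $\widehat{L}(C_i)^{-1}\succeq J$ is used) correctly force the crossing into $(0,1)$; the converse and the construction of the two-signed eigenvector via $y(v_i)=\mu(G)(1-\gamma)\mathbf{1}^{T}y$, $z(v_j)=\mu(G)\gamma\,\mathbf{1}^{T}z$ check out.

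Two steps at the end are compressed and each needs a line. First, your exclusion argument literally rules out only $\mu(G)$-eigenvectors of $\widehat{L}(C_i)$ that vanish at $v_i$; to conclude, as you do, that $\mu(G)$ is not an eigenvalue of $\widehat{L}(C_i)$ at all, invoke the standard rank-one-update fact: since $L(C_i)+\tfrac{1}{1-\gamma}e_{v_i}e_{v_i}^{T}=\widehat{L}(C_i)+\tfrac{\gamma}{1-\gamma}e_{v_i}e_{v_i}^{T}$ has $\mu(G)$ as its smallest eigenvalue, a common eigenvalue of a symmetric matrix and a rank-one perturbation of it forces an eigenvector of the unperturbed matrix orthogonal to the perturbing vector, i.e.\ vanishing at $v_i$ --- which is exactly what you excluded. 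Second, "determined by $(W(v_i),W(v_j))$" only bounds the eigenspace dimension by two; to get simplicity, add the compatibility relations obtained by evaluating your formulas at $v_i$ and $v_j$, namely $W(v_i)=W(v_j)\,e_{v_i}^{T}(\widehat{L}(C_i)-\mu(G)I)^{-1}e_{v_i}$ and $W(v_j)=W(v_i)\,e_{v_j}^{T}(\widehat{L}(C_j)-\mu(G)I)^{-1}e_{v_j}$, which show that $W(v_i)=0$ forces $W=0$ and otherwise $W$ is a scalar multiple of a fixed vector. With those two lines added, the argument is complete.
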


Identification of Perron components at a vertex helps to determine
the location of the characteristic set in $G$. The next proposition is a result
in that direction.

\begin{proposition}[\cite{kir1}, Theorem 1 and Corollary 1.1]\label{prop:perron}
Let $G$ be a connected graph. For any vertex $v$ of $G$ which is not a characteristic vertex, the unique Perron
component at $v$ contains the vertices in any characteristic set. A cut-vertex $v$ is a characteristic vertex of $G$ if and only if there are at least
two Perron components of $G$ at $v$, and in that case
$$\mu(G)=\frac{1}{\rho({\widehat L}(C)^{-1})},$$ where $C$ is a
Perron component of $G$ at $v$.
\end{proposition}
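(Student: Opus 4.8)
The plan is to route everything through Proposition~\ref{prop:alg}$(i)$, the restriction identity for Fiedler vectors, and Fiedler's structure theorem for the characteristic set. I would first dispose of the case that $v$ is not a cut-vertex: then $G-v$ is connected, it is the unique ``Perron component'' at $v$, and the subgraph it induces together with $v$ is all of $G$, so both assertions are vacuous there. Assume now that $v$ is a cut-vertex with connected components $C_1,\dots,C_k$ of $G$ at $v$, labelled so that $\rho_1:=\rho(\widehat L(C_1)^{-1})\ge\rho_2:=\rho(\widehat L(C_2)^{-1})\ge\cdots$; thus $C_1$ is a Perron component, and there are at least two Perron components exactly when $\rho_1=\rho_2$. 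For a Fiedler vector $Y$ I would use the identity obtained by reading off the rows of $L(G)Y=\mu(G)Y$ indexed by the vertices of $C_i$, namely $\bigl(\widehat L(C_i)-\mu(G)I\bigr)Y|_{C_i}=Y(v)\,\mathbf{1}_{N(v)\cap C_i}$, where $\mathbf{1}_{N(v)\cap C_i}$ is the indicator vector of the vertices of $C_i$ adjacent to $v$; together with $\widehat L(C_i)=L(C_i)+\operatorname{diag}(\mathbf{1}_{N(v)\cap C_i})$, these are the only structural facts about $\widehat L(C_i)$ I need.

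Next I would pin down the numerical part. For $t\ge 0$ set $f(t)=\lambda(\widehat L(C_1)^{-1}-tJ)$ and $g(t)=\rho(\widehat L(C_2)^{-1}\oplus\cdots\oplus\widehat L(C_k)^{-1}\oplus[0]+tJ)$. By Proposition~\ref{prop:alg}$(i)$ there is a unique $t^{*}\ge 0$ with $f(t^{*})=g(t^{*})=1/\mu(G)$. Now $f$ is non-increasing with $f(0)=\rho_1$, and $f(t)<\rho_1$ for $t>0$ (the top eigenvalue of the positive matrix $\widehat L(C_1)^{-1}$ is simple, with a positive eigenvector), while $g$ is non-decreasing with $g(0)=\rho_2$ and is strictly increasing for $t>0$ (adding $tJ$, a positive matrix when $t>0$, to a nonnegative matrix strictly increases the spectral radius). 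Hence if $\rho_1>\rho_2$ then $f(0)>g(0)$ forces $t^{*}>0$, so $1/\mu(G)=f(t^{*})<\rho_1$; and if $\rho_1=\rho_2$ then for every $t>0$ one has $g(t)>g(0)=\rho_1=f(0)\ge f(t)$, so necessarily $t^{*}=0$ and $\mu(G)=1/\rho_1$. Thus $\mu(G)=1/\rho(\widehat L(C)^{-1})$ for a Perron component $C$ precisely when there are at least two Perron components at $v$, and otherwise $\mu(G)>1/\rho_1$.

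For the characteristic-vertex equivalence I would argue both directions. If $C_1$ and $C_2$ are both Perron components with Perron vectors $z_1>0$ and $z_2>0$, define $Y$ by $Y|_{C_1}=az_1$, $Y|_{C_2}=bz_2$, $Y(v)=0$, and $Y=0$ on all other vertices, with $a>0>b$ chosen so that $a\mathbf{1}^{\top}z_1+b\mathbf{1}^{\top}z_2=0$, i.e.\ $Y\perp\mathbf{1}$; using $\widehat L(C_i)z_i=\rho_1^{-1}z_i$ and $\widehat L(C_i)=L(C_i)+\operatorname{diag}(\mathbf{1}_{N(v)\cap C_i})$, the form $Y^{\top}L(G)Y=\sum_{\{p,q\}\in E}(Y(p)-Y(q))^2$ collapses to $\rho_1^{-1}\,Y^{\top}Y$, so the Rayleigh quotient equals $1/\rho_1=1/\mu(G)$ and $Y$ is a Fiedler vector; since $z_1>0$, $Y$ is nonzero on the neighbours of $v$ in $C_1$, hence $v$ is a characteristic vertex (and $\mu(G)=1/\rho(\widehat L(C)^{-1})$ was already obtained above). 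Conversely, let $Y$ be a Fiedler vector with $Y(v)=0$ and a nonzero neighbour; the restriction identity gives $\widehat L(C_i)Y|_{C_i}=\mu(G)Y|_{C_i}$ for every $i$, and Fiedler's structure theorem, applied to the characteristic set which now contains the characteristic vertex $v$, forces $Y$ to have constant sign on each component $C_i$ on which it does not vanish (a sign change inside $C_i$ would place characteristic edges or vertices of $G$ in the interior of $C_i$, which the structure of $\mathcal{C}(G,Y)$ forbids). Then for each such $C_i$ the vector $Y|_{C_i}$ is a constant-sign nonzero eigenvector of the positive matrix $\widehat L(C_i)^{-1}$, hence its Perron vector, so $\rho(\widehat L(C_i)^{-1})=1/\mu(G)$; there are at least two such $i$ because $Y\perp\mathbf{1}$ rules out $Y$ being single-signed on one component; and if some $C_j$ had $\rho(\widehat L(C_j)^{-1})>1/\mu(G)$, then a Perron component would have $\rho_1>1/\mu(G)$, forcing $t^{*}>0$ and the contradiction $1/\mu(G)=g(t^{*})>g(0)\ge 1/\mu(G)$ (one of the two special components lies among $C_2,\dots,C_k$). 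Hence the Perron components are exactly the $C_i$ with $\rho(\widehat L(C_i)^{-1})=1/\mu(G)$, and there are at least two of them.

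Finally I would prove the localization. Let $v$ be a cut-vertex which is not a characteristic vertex; by the equivalence there is a unique Perron component $C_1$, so $\rho_1>\rho_2$ and $t^{*}>0$. For a Fiedler vector $Y$ with $Y(v)\ne 0$, say $Y(v)>0$: from the second paragraph, $1/\mu(G)=g(t^{*})>g(0)=\rho_2\ge\rho(\widehat L(C_i)^{-1})$ for every $i\ge 2$, so each $\widehat L(C_i)-\mu(G)I$ is a nonsingular irreducible $M$-matrix with entrywise positive inverse, and the restriction identity gives $Y|_{C_i}=Y(v)\bigl(\widehat L(C_i)-\mu(G)I\bigr)^{-1}\mathbf{1}_{N(v)\cap C_i}>0$ entrywise for all $i\ge 2$; with $Y(v)>0$ this makes $Y$ strictly positive off $C_1$, so every characteristic vertex lies in $C_1$ and every characteristic edge has its negative-sign endpoint in $C_1$, i.e.\ $\mathcal{C}(G,Y)$ lies in the subgraph induced by $C_1\cup\{v\}$. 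The remaining possibility, $Y(v)=0$ with $Y$ vanishing on all neighbours of $v$, is handled the same way: the restriction identity then forces $Y$ to be supported on a single component, which the inequality $\rho_2=g(0)\le g(t^{*})=1/\mu(G)$ identifies as the Perron component, and $\mathcal{C}(G,Y)$ lives there. I expect the main obstacle to be precisely the two invocations of Fiedler's structure theorem — the constant-sign conclusion in the ``only if'' direction and the localization of $\mathcal{C}(G,Y)$ — because outside the tree case the characteristic set may consist of several edges, and the exact form of the structure theorem must be used to control its position relative to the decomposition at $v$; by contrast, the monotonicity analysis of the pair $(f,g)$ and the single Rayleigh-quotient computation are routine.
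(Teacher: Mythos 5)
You should first note that the paper offers no proof of this statement at all---it is quoted verbatim from Kirkland and Fallat \cite{kir1}---so your argument has to stand on its own rather than be compared with an internal proof. Much of it does stand: the monotonicity analysis of $f$ and $g$ built on Proposition~\ref{prop:alg}$(i)$ correctly shows that there are at least two Perron components at the cut-vertex $v$ exactly when $t^{*}=0$, equivalently $\mu(G)=1/\rho_1$, and that otherwise $\rho_2<1/\mu(G)<\rho_1$; the Rayleigh-quotient construction for the ``if'' direction is sound (your vector is orthogonal to $\mathbf{1}$ and attains the minimum $1/\rho_1=\mu(G)$, hence is a genuine Fiedler vector, positive on the neighbours of $v$ in $C_1$); and, granting the equivalence, the localization via positivity of $(\widehat{L}(C_i)-\mu(G)I)^{-1}$ for $i\ge 2$ is the standard argument and is correct, up to the harmless caveat that a characteristic edge could have $v$ itself as its positive endpoint, so the clean conclusion is that $\mathcal{C}(G,Y)$ lies in the subgraph induced by $C_1\cup\{v\}$, as you in fact state.

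The genuine gap is exactly where you predicted it: the constant-sign claim in the ``only if'' direction. The structure theorem available in this paper says only that $\mathcal{C}(G,Y)$ is a single vertex or is contained in a block, and that does not forbid a Fiedler vector with $Y(v)=0$ which changes sign inside one component $C_i$: a characteristic vertex can perfectly well coexist with characteristic edges lying in the block of $C_i\cup\{v\}$ that contains $v$ (already on $C_5$ there is a Fiedler vector whose characteristic set consists of one vertex and one edge in the same block), so nothing ``in the interior of $C_i$'' is excluded by that theorem. Moreover, the claim cannot be a purely structural consequence of the restriction identity: for a cycle with a pendant vertex attached at $v$, the vector equal to the antisymmetric cycle eigenvector on $C_1$ and zero at $v$ and at the pendant is an honest eigenvector of $L(G)$, vanishing at the cut-vertex, sign-changing on $C_1$, with the nonzero values at the neighbours of $v$ cancelling---it simply fails to be a Fiedler vector because its eigenvalue is not the second smallest. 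So the only thing that can rule out the bad configuration is the minimality of $\mu(G)$ (for instance a sharp comparison of $\mu(G)$ with $\lambda_2(\widehat{L}(C_1))$, or the argument Kirkland and Fallat actually give), and your proposal supplies no such argument; since eliminating precisely this configuration is the substance of the ``only if'' direction, the proof as written is incomplete. A smaller, repairable slip in the same paragraph: from ``$\rho(\widehat{L}(C_j)^{-1})>1/\mu(G)$ for some $j$'' you infer $t^{*}>0$, but $t^{*}>0$ requires $\rho_1>\rho_2$; you should first dispose of the case $\rho_1=\rho_2$, which is the desired conclusion anyway.
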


\begin{remark}\label{rem:one}
Recall that $P_{2n+1}$ is a path on $2n+1$ vertices with the vertex $v_i$ being
adjacent to the vertex $v_{i+1}$ for $i=1, 2, \ldots, 2n$.
Let $C_1$ and $C_2$ be the two components of $P_{2n+1} - v_{n+1}$.
Then using symmetry, it can be easily observed that the vertex labelled $v_{n+1}$ is the characteristic vertex.
Hence by Proposition~\ref{prop:perron}
$$ \rho({\widehat L}(C)^{-1}) = \dfrac{1}{\mu(P_{2n+1})} = \dfrac{1}{ 2( 1 - \cos(\frac{\pi}{2 n+1}))}.$$
\end{remark}

For non-negative square matrices $A$ and $B$ (not necessarily of the same order), $A\ll B$ means that there exists a permutation matrix $P$ such that $PAP^{T}$ is entri-wise dominated by a principal sub-matrix of $B$, with strict inequality in at least one position in the case $A$ and $B$ have the same order. A useful fact from the Perron-Frobenius theory is that if $B$ is irreducible and $A\ll B$, then $\rho(A)<\rho(B)$. We use this fact together with Propositions \ref{prop:bottleneck} and \ref{prop:alg} (mostly without mention) to get information about the algebraic connectivity of a graph, and in particular, for a tree.

\section{Initial Results}\label{initial}

We start with the following observation. Let $K_n$ be the complete graph of order $n\geq 2$ and $v$ be a vertex of $K_n$. Let $C$ denote the only connected component of $K_n$ at $v$. We have $L(K_n)=nI-J$, an $n$-by-$n$ matrix.
Let $\widehat{L}(C)$ be the principal sub-matrix of $L(K_n)$ corresponding to the vertices of $C$. Then
$\widehat{L}(C)=nI-J$, an $(n-1)$-by-$(n-1)$ matrix. It can be verified that
$\widehat{L}(C)^{-1}=\frac{1}{n}I+\frac{1}{n}J$. Since the sum of each row of $\widehat{L}(C)^{-1}$
is one, the vector of all ones is an eigenvector of $\widehat{L}(C)^{-1}$ corresponding to the eigenvalue one.
So, by the Perron-Frobenius Theorem, the Perron value of $C$ is one.

\begin{lemma}\label{lem:extra}
Let $v$ be a vertex of a connected graph $G$. For a connected component $C$ of $G$ at $v$,
let $\widetilde{C}_v$ denote the induced subgraph of $G$ on the vertices of $C$ together with $v$. Then
the following results hold.
\begin{enumerate}
\item[$(i)$] If $\widetilde{C}_v$ is complete, then the Perron value of $C$ is one.
\item[$(ii)$] If $v$ is a cut-vertex and $\widetilde{C}_v$ is complete for each connected component $C$ of $G$ at $v$, then $\mu(G)=1$.
\end{enumerate}
\end{lemma}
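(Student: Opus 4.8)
The plan is to compute (or at least pin down) the relevant bottleneck matrix in each case and then appeal to the Perron--Frobenius setup recorded in Propositions~\ref{prop:alg} and~\ref{prop:perron}. For part~$(i)$, suppose $\widetilde{C}_v$ is complete, say on $m+1$ vertices, so that $C$ is the complete graph $K_m$. The vertices of $C$ together with $v$ induce $K_{m+1}$ inside $G$, and $v$ is adjacent in $G$ to \emph{all} vertices of $C$; hence the degree in $G$ of each vertex of $C$ equals $m$ \emph{plus} its number of neighbours outside $\widetilde{C}_v$. The key observation is that, since $C$ is a full connected component at $v$, no vertex of $C$ has a neighbour outside $\widetilde{C}_v$ other than $v$ itself — wait, that is not automatic. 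Let me instead argue directly: the principal submatrix $\widehat{L}(C)$ of $L(G)$ indexed by $V(C)$ has off-diagonal part $-A(K_m)$ and diagonal entries $d_G(u)$ for $u \in V(C)$. Each such $u$ is adjacent (within $\widetilde C_v$) to the other $m-1$ vertices of $C$ and to $v$, contributing $m$; and it has \emph{no further} neighbours, because any neighbour $w$ of $u$ in $G$ lies in the same connected component of $G$ at $v$ as $u$ — namely $C$ — or else $w = v$; since $V(C)\cup\{v\}$ already exhausts $\widetilde C_v$ and $\widetilde C_v$ is all of $C$'s component plus $v$, there is nothing left. So $d_G(u)=m$ for every $u\in V(C)$, giving $\widehat{L}(C) = (m+1)I - J$ of order $m$. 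Exactly the computation done just before the lemma (with $n$ replaced by $m+1$) yields $\widehat{L}(C)^{-1} = \frac{1}{m+1}I + \frac{1}{m+1}J$, whose row sums are all $1$, so the all-ones vector is a positive eigenvector with eigenvalue $1$; by Perron--Frobenius the Perron value of $C$ is $1$.

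For part~$(ii)$, apply part~$(i)$ to every connected component $C_1,\ldots,C_k$ of $G$ at the cut-vertex $v$: each has Perron value $1$, so \emph{every} $C_i$ is a Perron component of $G$ at $v$. Since $k \geq 2$ (as $v$ is a cut-vertex), there are at least two Perron components at $v$, and Proposition~\ref{prop:perron} then tells us that $v$ is a characteristic vertex of $G$ and that $\mu(G) = 1/\rho(\widehat{L}(C_1)^{-1}) = 1/1 = 1$.

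The only delicate point is the one flagged above in part~$(i)$: verifying that a vertex $u$ of $C$ has degree exactly $m$ in $G$, i.e.\ that $u$ has no $G$-neighbours outside $\widetilde{C}_v$. This is where the hypothesis ``$C$ is a connected component of $G-v$'' does the work: any neighbour $w\neq v$ of $u$ lies in the same component of $G-v$ as $u$, which is $C$ by definition; and since $\widetilde{C}_v$ is precisely $C$ together with $v$, such a $w$ is already counted. So in fact $\widetilde{C}_v$ complete forces $d_G(u) = |V(C)|$ for all $u\in V(C)$, and the matrix identity $\widehat{L}(C) = (|V(C)|+1)I - J$ follows cleanly; the rest is the routine inverse computation already carried out in the paragraph preceding the lemma. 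I expect no further obstacle.
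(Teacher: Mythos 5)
Your proof is correct and follows the paper's own argument: part $(i)$ amounts to observing that the principal submatrix of $L(G)$ on the vertices of $C$ coincides with that of $L(\widetilde{C}_v)$ (your degree computation makes this explicit) and then reusing the $K_n$ bottleneck-matrix computation preceding the lemma, while part $(ii)$ invokes Proposition~\ref{prop:perron} exactly as the paper does. No gaps; the "delicate point" you flag is precisely the routine fact the paper states in one line.
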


\begin{proof}
$(i)$ The principal sub-matrix of $L(G)$ and that of $L(\widetilde{C}_v)$ corresponding to the vertices of $C$ are the same. If $\widetilde{C}_v$ is complete, then replacing $K_n$ by $\widetilde{C}_v$ in the first paragraph of this section, it follows that the Perron value of $C$ is one.

$(ii)$ By $(i)$, $\rho(\widehat{L}(C)^{-1})=1$ for each connected component $C$ of $G$ at $v$ as $\mu(G)=1$ follows from Proposition \ref{prop:perron}.
\end{proof}

A consequence of Lemma~\ref{lem:extra} is the following.

\begin{corollary}\label{cor:one}
Let $G$ be a connected graph with a cut-vertex $v$. Then $\mu(G)\leq 1.$
\end{corollary}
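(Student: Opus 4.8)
The statement to prove is Corollary~\ref{cor:one}: if $G$ is a connected graph with a cut-vertex $v$, then $\mu(G)\leq 1$.

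The plan is to exploit Proposition~\ref{prop:alg}$(i)$ together with the monotonicity $A\ll B\Rightarrow\rho(A)\le\rho(B)$ for irreducible $B$. Since $v$ is a cut-vertex, let $C_1,\dots,C_k$ (with $k\ge 2$) be the connected components of $G$ at $v$, labelled so that $C_1$ is a Perron component. By Proposition~\ref{prop:alg}$(i)$ there is a unique $x\ge 0$ with
$$\lambda\!\left(\widehat L(C_1)^{-1}-xJ\right)=\rho\!\left(\widehat L(C_2)^{-1}\oplus\cdots\oplus\widehat L(C_k)^{-1}\oplus[0]+xJ\right)=\frac{1}{\mu(G)}.$$
Thus it suffices to show $\frac{1}{\mu(G)}\ge 1$, i.e. that this common value is at least $1$.

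The key step is to bound the right-hand side from below. The direct summand $[0]+xJ$ appearing there is the $1\times1$ block $[x]$, and the whole matrix $M:=\widehat L(C_2)^{-1}\oplus\cdots\oplus\widehat L(C_k)^{-1}\oplus[x]$ is nonnegative (each $\widehat L(C_i)^{-1}$ is a positive matrix, and $x\ge 0$), with strictly positive diagonal in the blocks coming from the bottleneck matrices. In fact, for each $i$, every diagonal entry of $\widehat L(C_i)^{-1}$ is at least $1$: this follows either from Proposition~\ref{prop:bottleneck} in the tree case (a diagonal entry counts the edges of the path $P_{iv}$, which is $\ge 1$), or more generally from the fact that $\widehat L(C_i)$ is a diagonally dominant $M$-matrix whose row sums are at least $1$ — indeed $\widehat L(C_i)\mathbf 1\le \mathbf 1$ entrywise because the only way a row sum of $\widehat L(C_i)$ fails to be $0$ is through edges from $C_i$ to $v$, contributing at most... — so $\widehat L(C_i)^{-1}\mathbf 1\ge\mathbf 1$, forcing some (hence, by positivity arguments, each relevant) diagonal entry to dominate. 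A cleaner route: pick any component $C_j$ with $j\ne 1$ and a vertex $u$ of $C_j$ adjacent to $v$; the $(u,u)$ diagonal entry of $\widehat L(C_j)^{-1}$ is at least the reciprocal of the $(u,u)$ entry of $\widehat L(C_j)$... Rather than belabour this, the safe formulation is: since $M$ has a positive eigenvalue and a nonnegative eigenvector whose support includes a diagonal position with entry $\ge 1$, a Rayleigh/Collatz–Wielandt estimate gives $\rho(M+xJ-\,\dots)\ge 1$; but the slickest argument is the comparison below.

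Here is the comparison I would actually run. Let $K=\widetilde{C_1}_v$ be complete — no: instead, add edges to turn $G$ into a graph $G'$ in which $v$ together with each $C_i$ spans a complete subgraph, i.e. $\widetilde{(C_i)}_v$ is complete for all $i$ (we only add edges, never between distinct $C_i$'s, so $v$ stays a cut-vertex and $G$ is a spanning subgraph of $G'$). By Lemma~\ref{lem:extra}$(ii)$, $\mu(G')=1$. By repeated application of Lemma~\ref{lem:edge}, $\mu(G)\le\mu(G')=1$, which is exactly the claim. The main obstacle is purely bookkeeping: checking that one can always add such edges while keeping $v$ a cut-vertex (immediate, since we add no edge joining two different components) and that $G'$ then satisfies the hypotheses of Lemma~\ref{lem:extra}$(ii)$ (also immediate by construction). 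So the proof reduces to two sentences once this construction is stated, and the only thing requiring care is phrasing the edge-addition so that the components of $G'$ at $v$ are exactly the (now completed) $C_i$'s.

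\begin{proof}
Since $v$ is a cut-vertex of $G$, let $C_1,C_2,\ldots,C_k$ ($k\geq 2$) be the connected components of $G$ at $v$. Construct a graph $G'$ on the same vertex set as $G$ by adding, for each $i$, all edges between pairs of distinct vertices of $\widetilde{(C_i)}_v$ that are not already present (equivalently, making the induced subgraph on $V(C_i)\cup\{v\}$ complete for every $i$). Since no edge joining a vertex of $C_i$ to a vertex of $C_j$ with $i\neq j$ is added, the vertex $v$ is still a cut-vertex of $G'$, and the connected components of $G'$ at $v$ are precisely $C_1',\ldots,C_k'$, where $\widetilde{(C_i')}_{v}$ is complete for each $i$. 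By Lemma~\ref{lem:extra}$(ii)$, $\mu(G')=1$. Since $G$ is a spanning subgraph of $G'$, repeated application of Lemma~\ref{lem:edge} gives $\mu(G)\leq\mu(G')=1$.
\end{proof}
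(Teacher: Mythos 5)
Your final proof is correct and is essentially the paper's own argument: complete each $\widetilde{C}_v$ to get a graph $G'$ with $\mu(G')=1$ by Lemma~\ref{lem:extra}$(ii)$, then apply Lemma~\ref{lem:edge} to conclude $\mu(G)\leq\mu(G')=1$. The exploratory discussion via Proposition~\ref{prop:alg} is abandoned and not needed; the two-sentence comparison argument you settle on matches the paper exactly.
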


\begin{proof}
Let $\widehat{G}$ be the graph obtained from $G$ by making $\widetilde{C}_v$ complete for each connected component $C$ of $G$ at $v$. By Lemma \ref{lem:edge}, $\mu(G)\leq \mu(\widehat{G})$. Applying Lemma \ref{lem:extra}$(ii)$ to the pair $(\widehat{G},v)$, we get that $\mu(\widehat{G})=1$. So $\mu(G)\leq 1$.
\end{proof}

The statements of Lemma \ref{lem:extra} and Corollary \ref{cor:one} were implicitly mentioned in (\cite{fal}, Example 1.5, p.51). We write them here with their proofs for the sake of completeness and for later use in this paper. We also note that Corollary \ref{cor:one} follows from (\cite{fi1}, 4.1, p.303) since the vertex connectivity of $G$ is one.

\begin{lemma}\label{lem:extra1}
Let $v$ be a vertex of a connected graph $G$. If $v$ is not a cut-vertex of $G$, then the Perron value of $C=G-v$ (the only connected component of $G$ at $v$) is at least one.
\end{lemma}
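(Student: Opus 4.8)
The plan is to exhibit a single vertex whose bottleneck-matrix entry is at least one, which immediately forces the Perron value (the dominant eigenvalue of a positive matrix) to be at least that diagonal entry, hence at least one. Recall that for a positive definite—indeed, positive—symmetric matrix $M$, the largest eigenvalue $\rho(M)$ satisfies $\rho(M)\geq \max_i M_{ii}$, since $\rho(M)=\max_{\|x\|=1} x^{T}Mx \geq e_i^{T}Me_i = M_{ii}$ for any standard basis vector $e_i$. So it suffices to locate a vertex $w$ of $C=G-v$ with $\bigl(\widehat{L}(C)^{-1}\bigr)_{ww}\geq 1$.

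First I would pick a vertex $w$ of $C$ that is adjacent to $v$ in $G$; such a $w$ exists because $G$ is connected and $v$ is not a cut-vertex, so $C=G-v$ is connected and some vertex of $C$ neighbours $v$. The key computation is then to show $\bigl(\widehat{L}(C)^{-1}\bigr)_{ww}\geq 1$. Write $\widehat{L}(C)=\widehat{L}$ for brevity, and let $d_w$ be the degree of $w$ in $G$; then the $w$-th diagonal entry of $\widehat{L}$ equals $d_w$, and the $w$-th row of $\widehat{L}$ has off-diagonal entries summing (in absolute value) to $d_w-1$, since exactly one neighbour of $w$, namely $v$, lies outside $C$. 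Thus the $w$-th row sum of $\widehat{L}$ is $1$ while every other row sum is $\geq 0$ (each vertex of $C$ either has all its neighbours in $C$, giving row sum $0$, or some neighbour equal to $v$, giving a positive row sum). In other words, $\widehat{L}\mathbf{1}\geq e_w$ entrywise, where $\mathbf{1}$ is the all-ones vector. Since $\widehat{L}^{-1}$ is a nonnegative matrix (it is a positive matrix, being the bottleneck matrix), multiplying on the left by $\widehat{L}^{-1}$ preserves the inequality: $\mathbf{1}\geq \widehat{L}^{-1}e_w$, i.e. every entry of the $w$-th column of $\widehat{L}^{-1}$ is at most $1$.

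This is the opposite of what I want, so I would instead argue directly about the $w$-th diagonal entry using the probabilistic/flow interpretation, or more cleanly as follows. Let $C'$ be the connected component of the tree-like situation; actually the simplest rigorous route is: consider the vector $x=\widehat{L}^{-1}e_w$, so $\widehat{L}x=e_w$, $x>0$. Then $x_w = e_w^{T}x = x^{T}\widehat{L}x \geq 0$ and, more to the point, $1 = e_w^{T}\widehat{L}x = \sum_{i}\widehat{L}_{wi}x_i = d_w x_w - \sum_{i\sim w,\, i\in C} x_i \leq d_w x_w$, which only gives $x_w\geq 1/d_w$. To get $x_w\geq 1$ I would instead compare with the one-vertex case: add to $C$ a pendant copy or use monotonicity of bottleneck matrices under deletion, namely that deleting vertices of $C$ far from $w$ can only decrease the relevant diagonal entry, reducing to the case where $C$ is a single edge $w$–$v$'s neighbourhood, i.e. $C=\{w\}$ with $\widehat{L}=[1]$ if $w$ had degree one, giving diagonal entry exactly $1$; and since $\widehat{L}(C)^{-1}\gg \widehat{L}(\{w\})^{-1}=[1]$ when $|C|>1$ by the $\ll$ relation and Proposition~\ref{prop:bottleneck}-type domination, the diagonal entry at $w$ is $\geq 1$. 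The main obstacle is precisely making this last monotonicity step clean for a general (not necessarily tree) graph $G$: in the tree case Proposition~\ref{prop:bottleneck} gives $m_{ww}=$ (length of the path from $w$ to $v$) $\geq 1$ outright, so I would first dispatch the tree case with that formula, then handle the general graph by passing through a spanning tree together with Lemma~\ref{lem:edge} (adding edges only increases $\mu$, hence, via Proposition~\ref{prop:perron} and the relation $\mu=1/\rho$, can only decrease the Perron value of the corresponding component—so the tree's Perron value, already $\geq 1$, bounds things from the correct side). I expect the bookkeeping reconciling these inequality directions to be the delicate part; everything else is routine.
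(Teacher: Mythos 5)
Your central strategy---locating a vertex $w$ of $C$ with $\bigl(\widehat{L}(C)^{-1}\bigr)_{ww}\geq 1$---cannot work, because no such vertex need exist. Take $G=K_n$ and any vertex $v$: then $C=K_{n-1}$, $\widehat{L}(C)=nI-J$, and $\widehat{L}(C)^{-1}=\frac{1}{n}I+\frac{1}{n}J$, so \emph{every} entry of the bottleneck matrix, diagonal entries included, equals $\frac1n$ or $\frac2n<1$, while the Perron value is exactly $1$ (the row sums are $1$). So the Perron value is forced up by the off-diagonal mass, not by any single entry, and a bound of the form $\rho\geq\max_i M_{ii}$ is simply too weak. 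Your own intermediate computation already signals this: from $\widehat{L}\mathbf{1}\geq e_w$ you correctly get $\widehat{L}^{-1}e_w\leq\mathbf{1}$, i.e.\ the $w$-column entries are at most $1$. The attempted rescue via domination is also incorrect: the $1\times1$ principal submatrix of $L(G)$ at $w$ is $[d_w]$, not $[1]$, so the monotonicity you invoke only yields $\bigl(\widehat{L}(C)^{-1}\bigr)_{ww}\geq 1/d_w$ (consistent with the $K_n$ example), and the claim $\widehat{L}(C)^{-1}\gg[1]$ is false there. Finally, the spanning-tree fallback has the inequality pointing the wrong way: Lemma~\ref{lem:edge} says adding edges can only raise $\mu$, so even where a relation $\mu=1/\rho$ is available it would give $\rho$ for $G$ \emph{at most} the tree's value; knowing the tree's Perron value is $\geq1$ then tells you nothing about $G$. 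Moreover Proposition~\ref{prop:perron} is not applicable at $v$ in $G$ at all, since $v$ is by hypothesis not a cut-vertex, hence not a characteristic vertex with two Perron components.

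The paper's proof avoids all of this by \emph{creating} a cut-vertex: attach one pendant vertex $u$ to $v$, forming $\widetilde{G}$, whose components at $v$ are $C_1=C$ (with the same bottleneck matrix as in $G$) and $C_2=\{u\}$ with $\widehat{L}(C_2)^{-1}=[1]$. Corollary~\ref{cor:one} gives $\mu(\widetilde{G})\leq 1$, and then Proposition~\ref{prop:alg} (or Proposition~\ref{prop:perron}) rules out $C_2$ being the strictly dominant Perron component, forcing $\rho\bigl(\widehat{L}(C)^{-1}\bigr)\geq\rho\bigl(\widehat{L}(C_2)^{-1}\bigr)=1$. If you want a direct argument in the spirit of your row-sum computation, note instead that with $m=n-1=|C|$ and $\mathbf{1}$ the all-ones vector, $\frac{1}{m}\mathbf{1}^{T}\widehat{L}(C)\mathbf{1}=\frac{\deg_G(v)}{n-1}\leq 1$, so the least eigenvalue of $\widehat{L}(C)$ is at most $1$ and hence $\rho\bigl(\widehat{L}(C)^{-1}\bigr)\geq 1$; this uses the aggregate row sum (which is $\deg_G(v)$) rather than a single diagonal entry, which is exactly what the $K_n$ example shows is needed.
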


\begin{proof}
Let $\widetilde{G}$ be the graph obtained from $G$ by adding one pendant vertex adjacent to $v$. Let $C_1 =C$ and $C_2$ be the two connected components of $\widetilde{G}$ at $v$. The principal sub-matrix of $L(G)$ and that of $L(\widetilde{G})$ corresponding to the vertices of $C_1$ are the same. The Perron value of $C_2$ is $1$ and $\mu(\widetilde{G})\leq 1$ (Corollary \ref{cor:one}). Then the component $C_2$ cannot be a Perron component (use Proposition~\ref{prop:alg}) and
hence $\rho(\widehat{L}(C)^{-1})=\rho(\widehat{L}(C_1)^{-1})\geq \rho(\widehat{L}(C_2)^{-1}) = 1$.
\end{proof}

The next theorem gives conditions under which the equality is attained in Lemma \ref{lem:edge}.

\begin{theorem}\label{thm:equality}
Let $v$ be a vertex of a connected graph $G$ such that $v$ is not a cut-vertex of $G$. Form a new graph $\widetilde{G}$ from $G$ by adding $t\geq 1$ pendant vertices $v_1,\cdots,v_t$ adjacent to $v$. Let $\widehat{G}$ be the graph obtained from $\widetilde{G}$ by adding $k,\; 0\leq k\leq \frac{t(t-1)}{2},$ edges among $v_1,\cdots,v_t$. Then $\mu(\widetilde{G})=\mu(\widehat{G}).$
\end{theorem}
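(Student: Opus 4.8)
The plan is to prove the two inequalities $\mu(\widetilde{G})\le\mu(\widehat{G})$ and $\mu(\widehat{G})\le\mu(\widetilde{G})$ separately. The first is immediate: $\widehat{G}$ is obtained from $\widetilde{G}$ by successively joining pairs of non-adjacent vertices among $v_{1},\dots,v_{t}$, so applying Lemma~\ref{lem:edge} once for each of the $k$ added edges gives $\mu(\widetilde{G})\le\mu(\widehat{G})$. The content of the theorem is the reverse inequality, and the idea for it is to exhibit a Fiedler vector of $\widetilde{G}$ that is simultaneously an eigenvector of $L(\widehat{G})$ for the same eigenvalue.

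Set $\mu:=\mu(\widetilde{G})$ and let $Y$ be a Fiedler vector of $\widetilde{G}$; since $\mu>0$, we have $Y\neq 0$ and $Y$ is orthogonal to the all-ones vector. First I would record that, because $v$ is not a cut-vertex of $G$ and $G$ has at least two vertices (the single-vertex case being excluded; see the last paragraph), the component $G-v$ is nonempty and disjoint from $v_{1},\dots,v_{t}$, so both $\widetilde{G}$ and $\widehat{G}$ have at least two components at $v$; that is, $v$ is a cut-vertex of each. Now split into two cases according to whether $\mu=1$.

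If $\mu\neq 1$: reading the eigenvalue equation $L(\widetilde{G})Y=\mu Y$ at the pendant vertex $v_{i}$ (which has degree one with unique neighbour $v$) gives $Y(v_{i})-Y(v)=\mu Y(v_{i})$, i.e. $(1-\mu)Y(v_{i})=Y(v)$, so $Y(v_{i})=Y(v)/(1-\mu)$ for every $i$; in particular $Y(v_{1})=\dots=Y(v_{t})$. Writing $L(\widehat{G})-L(\widetilde{G})$ as a sum of $k$ Laplacians of single edges $\{v_{i},v_{j}\}$, each such summand annihilates $Y$ because $Y(v_{i})-Y(v_{j})=0$; hence $L(\widehat{G})Y=L(\widetilde{G})Y=\mu Y$. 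Since $\widehat{G}$ is connected, every eigenvalue of $L(\widehat{G})$ with an eigenvector orthogonal to the all-ones vector is at least $\mu(\widehat{G})$, so $\mu(\widehat{G})\le\mu$. If instead $\mu=1$: since $v$ is a cut-vertex of $\widehat{G}$, Corollary~\ref{cor:one} gives $\mu(\widehat{G})\le 1=\mu$. In both cases $\mu(\widehat{G})\le\mu(\widetilde{G})$, which together with the first inequality yields $\mu(\widetilde{G})=\mu(\widehat{G})$.

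I expect the only delicate point to be the case $\mu(\widetilde{G})=1$: there the pendant equation collapses to $Y(v)=0$ and imposes no relation among $Y(v_{1}),\dots,Y(v_{t})$, so the common-eigenvector argument can genuinely fail (e.g. when $\widetilde{G}$ is a star the eigenspace for $1$ is spanned by differences of pendant coordinates and contains no vector with all those coordinates equal). The remedy is that a common eigenvector is not needed here: once $\mu(\widetilde{G})=1$, the cut-vertex bound of Corollary~\ref{cor:one} already forces $\mu(\widehat{G})\le 1$, and the two quantities are squeezed to $1$. This is also the only place where one uses that $G$ is not a single vertex, which is necessary, since for $G$ a single vertex one has $\widetilde{G}=K_{1,t}$ and the conclusion can fail (for instance when all $\binom{t}{2}$ edges are added, so that $\widehat{G}=K_{t+1}$).
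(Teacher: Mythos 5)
Your proof is correct, and it takes a genuinely different route from the paper. The paper also begins with Lemma~\ref{lem:edge} (and Corollary~\ref{cor:one}) and dispatches the case $\mu(\widetilde{G})=1$ immediately, but for $\mu(\widetilde{G})<1$ it works entirely inside the Perron-component framework: it identifies $G-v$ as a Perron component of $\widetilde{G}$ at $v$ (via Lemma~\ref{lem:extra1}), invokes Proposition~\ref{prop:alg}$(i)$ to get the parameter $\theta$, shows that the Perron vector of the block matrix ${\widehat L}(C_2)^{-1}\oplus\cdots\oplus{\widehat L}(C_{t+1})^{-1}\oplus[0]+\theta J$ has equal entries on the $t$ pendant blocks, transplants that vector to the corresponding matrix for the graph $G^*$ with all $\binom{t}{2}$ edges added, and concludes via Proposition~\ref{prop:alg}$(ii)$ that $\mu(\widetilde{G})$ is an eigenvalue of $L(G^*)$, sandwiching $\widehat{G}$ between $\widetilde{G}$ and $G^*$. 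You obtain the same key fact --- equality of the coordinates at $v_1,\dots,v_t$ --- directly from the eigenvalue equation of $L(\widetilde{G})$ at the pendant vertices, so the Fiedler vector itself survives the edge additions ($L(\widehat{G})-L(\widetilde{G})$ annihilates it), which handles every $0\le k\le\binom{t}{2}$ at once with no bottleneck matrices, no $\theta$, and no appeal to Proposition~\ref{prop:alg}; the cost is nothing beyond the standard facts that a Fiedler vector is orthogonal to the all-ones vector and that $0$ is a simple Laplacian eigenvalue of a connected graph. Your handling of the degenerate case $\mu(\widetilde{G})=1$ via the cut-vertex bound of Corollary~\ref{cor:one} mirrors the paper's shortcut, and your remark about $G$ being a single vertex is a fair catch: the theorem's statement tacitly assumes $G-v\neq\emptyset$ (the paper's own proof needs this too, e.g.\ in Lemma~\ref{lem:extra1} and in applying Corollary~\ref{cor:one} to $G^*$), and your example $\widetilde{G}=K_{1,t}$, $\widehat{G}=K_{t+1}$ shows the hypothesis is not vacuous.
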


\begin{proof}
Let $G^*$ be the graph obtained from $\widetilde{G}$ by adding $\frac{t(t-1)}{2}$ edges among $v_1,\cdots,v_t$. By Lemma \ref{lem:edge} and Corollary \ref{cor:one}, $\mu(\widetilde{G})\leq \mu(\widehat{G})\leq \mu(G^*)\leq 1$. If $\mu(\widetilde{G})=1$, we are done. So, we assume that $\mu(\widetilde{G})<1$
and prove that  $\mu(G^*)\leq\mu(\widetilde{G})$.

Let $C_1,C_2,\cdots,C_{t+1}$ be the $t+1$ connected components of $\widetilde{G}$ at $v$, where $C_1=G-v$. For $2\leq i\leq t+1$, the Perron value of $C_i$ is one. As $v$ is not a cut-vertex of $G$, by Lemma~\ref{lem:extra1},
the Perron value of $C_1$ is at least $1$. So, by definition, $C_1$ is a Perron component of $\widetilde{G}$ at $v$. By Proposition \ref{prop:alg}$(i)$, there is a unique non-negative number $\theta$ such that
$$\lambda\left({\widehat L}(C_1)^{-1}-\theta J\right)=\rho\left({\widehat
L}(C_2)^{-1}\oplus\cdots\oplus{\widehat L}(C_{t+1})^{-1}\oplus
[0]+\theta J\right)=\frac{1}{\mu(\widetilde{G})}.$$
Since $\mu(\widetilde{G})<1$, the second equality implies that $\theta>0$.

Let $M={\widehat
L}(C_2)^{-1}\oplus\cdots\oplus{\widehat L}(C_{t+1})^{-1}\oplus
[0]+\theta J$. Thus $M$ is a $(t+1)$-by-$(t+1)$ positive matrix with Perron value $\frac{1}{\mu(\widetilde{G})}$. Let $Y=[y_1,\cdots,y_t,y_{t+1}]^T$ be a Perron vector of $M$ and take $\beta=\theta(y_1+\cdots +y_{t+1})$. Then $\beta>0$. A simple calculation shows that $y_i +\beta =\frac{1}{\mu(\widetilde{G})}y_i$ for $1\leq i\leq t$ and $\beta =\dfrac{1}{\mu(\widetilde{G})}y_{t+1}$. Since $0<\mu(\widetilde{G})<1,$ it follows that $y_1=\cdots =y_t$. So without loss of generality we may take $Y=[1,\cdots,1,y]^T$ for some $y>0$.

There are two connected components of $G^*$ at $v$, say $D_1=C_1$ and $D_2$. The Perron value of $D_2$ is one by Lemma \ref{lem:extra}$(i)$. So $D_1$ is a Perron component of $G^*$ at $v$ since it has Perron value at least one. Let $M^*= \widehat{L}(D_2)^{-1}\oplus [0]+\theta J$.
Using $\widehat{L}(D_2)^{-1}=\frac{1}{t+1}I+\frac{1}{t+1}J$ (see the first paragraph of this section), it can be verified that $M^*Y=\frac{1}{\mu(\widetilde{G})}Y.$ So, by the Perron-Frobenius Theorem, $\rho(M^*)=\frac{1}{\mu(\widetilde{G})}$. Thus,
$$\rho\left(\widehat{L}(D_2)^{-1}\oplus [0]+\theta J\right)=\frac{1}{\mu(\widetilde{G})}=\lambda\left({\widehat L}(C_1)^{-1}-\theta J\right)=\lambda\left({\widehat L}(D_1)^{-1}-\theta J\right).$$
Now, by Proposition \ref{prop:alg}$(ii)$, $\mu(\widetilde{G})$ is an eigenvalue of $L(G^*)$ and hence $\mu(G^*)\leq\mu(\widetilde{G})$ as, by definition, $\mu(G^*)$ is the smallest nonzero eigenvalue of $L(G^*)$.
This completes the proof.
\end{proof}

\section{With Pendant Vertices}\label{with}

Let $\mathfrak{H}_{n,k}$ denote the class of all connected graphs of order $n$ with $k$ pendant vertices. We may assume that $1\leq k\leq n-1$ and hence $n\geq 3$.  We first consider the question of maximizing the algebraic connectivity over $\mathfrak{H}_{n,k}$. We now define a collection of graphs, denoted $P_n^k$, which have exactly $k$ pendant vertices and
hence are members of $\mathfrak{H}_{n,k}$.
For $k\neq n-2$, the graph $P_n ^k$ is obtained by adding $k$ pendant vertices adjacent to a single vertex of the complete graph $K_{n-k}$. There is no graph of order $n=3$ with exactly one pendant vertex. For $n\geq 4$ and $k=n-2$, the graph
$P_n ^{n-2}$ is obtained by adding $n-3$ pendant vertices adjacent to a pendant vertex of the path $P_3$.

\begin{figure}[h]
\centering
\includegraphics[scale=.9]{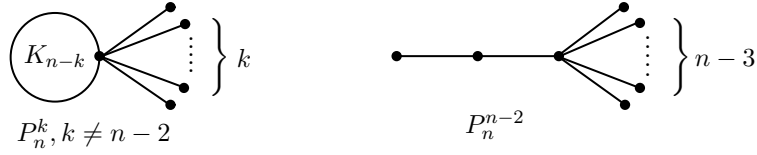}
\caption{The graph $P_n^k$}\label{The graph PNK}
\end{figure}

\begin{theorem}\label{thm:max}
The graph $P_n^k,\; k\neq n-2,$ attains the maximum algebraic connectivity over $\mathfrak{H}_{n,k}$ and this maximum value is equal to one.
\end{theorem}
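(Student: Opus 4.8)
The plan is to prove the two assertions of the statement in turn: (a) that $\mu(P_n^k)=1$, and (b) that $\mu(G)\le 1$ for every $G\in\mathfrak{H}_{n,k}$. Together these say precisely that $P_n^k$ realizes the maximum of $\mu$ over $\mathfrak{H}_{n,k}$ and that this maximum equals one, so no further work is needed once both are in hand.

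For (a), I would single out the vertex $v$ of $P_n^k$ to which the $k$ pendant vertices are attached --- a vertex of the clique $K_{n-k}$ when $n-k\ge 2$, and the lone vertex of $K_1$ when $n-k=1$. Since $k\ne n-2$ together with $1\le k\le n-1$ leaves only the possibilities $n-k=1$ or $n-k\ge 3$, in every admissible case $v$ is a cut-vertex of $P_n^k$. Its connected components at $v$ are the $k$ pendant vertices, each a one-vertex component, and, when $n-k\ge 3$, a copy of $K_{n-k-1}$. For each such component $C$ the induced subgraph $\widetilde{C}_v$ is complete: it is $K_2$ for a pendant component and $K_{n-k}$ for the component coming from $K_{n-k-1}$. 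So Lemma~\ref{lem:extra}$(ii)$ applies to the pair $(P_n^k,v)$ and gives $\mu(P_n^k)=1$.

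For (b), I would take an arbitrary $G\in\mathfrak{H}_{n,k}$, recall that necessarily $n\ge 3$, and use that $k\ge 1$ to produce a pendant vertex $u$ of $G$ with unique neighbour $w$. Then $G-w$ has $u$ as an isolated vertex while still containing the other $n-2\ge 1$ vertices, so $G-w$ is disconnected and $w$ is a cut-vertex of $G$; Corollary~\ref{cor:one} then yields $\mu(G)\le 1$. Combining (a) and (b) gives $\mu(G)\le 1=\mu(P_n^k)$ for all $G\in\mathfrak{H}_{n,k}$, as required. Given Lemma~\ref{lem:extra} and Corollary~\ref{cor:one}, there is no genuine obstacle here; the only point meriting attention is the bookkeeping in (a) --- checking in each of the cases $n-k=1$ and $n-k\ge 3$ that $v$ is indeed a cut-vertex and that every $\widetilde{C}_v$ is complete --- and this is exactly where the hypothesis $k\ne n-2$ enters, since for $n-k=2$ the graph $P_n^k$ is defined by a different construction and the naive analogue would not even have $k$ pendant vertices.
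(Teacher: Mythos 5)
Your proposal is correct and follows essentially the same route as the paper: Corollary~\ref{cor:one} (via the cut-vertex adjacent to a pendant vertex) gives $\mu(G)\le 1$ for every $G\in\mathfrak{H}_{n,k}$, and Lemma~\ref{lem:extra}$(ii)$ applied at the attachment vertex of $P_n^k$ gives $\mu(P_n^k)=1$. You merely spell out the cut-vertex and completeness checks that the paper leaves implicit, which is fine.
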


\begin{proof}
Let $G$ be a graph in $\mathfrak{H}_{n,k}$. Since $k\geq 1$, $G$ has a cut-vertex. By Corollary \ref{cor:one}, $\mu(G)\leq 1$. Also $P_n^k,\;k\neq n-2,$ has a cut-vertex satisfying the hypothesis of Lemma \ref{lem:extra}$(ii)$. So $\mu(P_n^k)=1$ and hence $\mu(G)\leq \mu(P_n^k)$.
\end{proof}

\begin{remark}[{\bf Uniqueness}] \label{rem:uniqueness}
For $k=n-1$, $\mathfrak{H}_{n,k}$ contains only one graph, namely, $K_{1,n-1}\simeq P_n^{n-1}$. For $k=n-3$, let $G$ be a graph in $\mathfrak{H}_{n,n-3}$. If $G$ is a tree then clearly $\mu(G)<1$. Otherwise $G$ has a unique 3-cycle. By (\cite{fal}, Theorem 4.14, p.73), it follows that $\mu(G)=1$ if and only if $G\simeq P_n^{n-3}$.
However, for $1\leq k\leq n-4$, $P_n^k$ is not the unique graph (up to isomorphism) in $\mathfrak{H}_{n,k}$ having algebraic connectivity one. We give an example below.
\end{remark}

\begin{example}
Let $1\leq k\leq n-4$. Let $v_{1},\cdots,v_{n-k-1}$ be $n-k-1$ pendant vertices in the star $K_{1,n-1}$. Let $G$ be the graph obtained from $K_{1,n-1}$ by adding new edges $\{v_i,v_{i+1}\}$, $1\leq i\leq n-k-2$. Then $G$ is a member of $\mathfrak{H}_{n,k}$. Since $n\geq 5$, $G$ is not isomorphic to $P_n^k$. But by Theorem $\ref{thm:equality}$,  $\mu(G)=\mu(K_{1,n-1})=1$.
\end{example}

\begin{theorem}
The graph $P_n^{n-2}$ uniquely maximizes the algebraic connectivity over $\mathfrak{H}_{n,n-2}$.
\end{theorem}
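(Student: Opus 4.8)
The plan is to reduce the statement to Proposition~\ref{prop:diameter1} by identifying $\mathfrak{H}_{n,n-2}$ with a well-understood family of trees. First I would establish the structural fact that every $G\in\mathfrak{H}_{n,n-2}$ is a tree, in fact a double star. Indeed, let $u$ and $w$ be the two non-pendant vertices of $G$. Every edge of $G$ is incident to a pendant vertex or joins $u$ and $w$, so $G$ has at most $(n-2)+1=n-1$ edges; since $G$ is connected on $n$ vertices this forces $G$ to have exactly $n-1$ edges, i.e.\ $G$ is a tree. A tree of order $n$ with $n-2$ leaves has exactly two internal vertices, and since the internal vertices of a tree induce a subtree (the path between two internal vertices has only internal vertices as interior points), these two vertices are adjacent. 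Hence $G$ is the double star $T(a,\,n-2-a,\,2)$ for some $a$ with $1\le a\le n-3$, and such a tree has diameter exactly $3$. Conversely, a tree of order $n$ with diameter $3$ has its centre on an edge $\{u,w\}$, and every other vertex is adjacent to $u$ or to $w$ (otherwise the diameter would exceed $3$), so it is one of these double stars and lies in $\mathfrak{H}_{n,n-2}$. Thus $\mathfrak{H}_{n,n-2}$ is exactly the set of trees of order $n$ with diameter $3$.

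Next I would locate $P_n^{n-2}$ inside this family. By its definition, $P_n^{n-2}$ is obtained from the path $P_3$ by attaching $n-3$ pendant vertices at one of its end vertices; an inspection of the vertex degrees shows this is precisely the double star $T(n-3,1,2)$. On the other hand, taking $d=2$ in Proposition~\ref{prop:diameter1}, the extremal tree for diameter $3$ is $T_{n-d-2}^{d+2}=T_{n-4}^{4}$, namely the path $v_1v_2v_3v_4$ with $n-4$ pendant vertices attached at $v_2=v_{\lfloor 5/2\rfloor}$; again counting degrees identifies this with $T(n-3,1,2)$. Hence $P_n^{n-2}\simeq T_{n-4}^{4}$.

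Combining the two previous paragraphs, Proposition~\ref{prop:diameter1} with $d=2$ says that among all trees of order $n$ with diameter $3$ the algebraic connectivity is \emph{uniquely} maximised by $T_{n-4}^{4}\simeq P_n^{n-2}$, and this is exactly the class $\mathfrak{H}_{n,n-2}$, which gives the theorem. For completeness one should dispose of the small/degenerate range: when $n=4$ the class $\mathfrak{H}_{4,2}$ consists only of $P_4=P_4^{2}$ (and the construction $T_{n-4}^{4}$ degenerates to $P_4$), so the assertion is trivial there; for $n\ge 5$ the proposition applies verbatim.

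The only point that needs genuine care is the structural step in the first paragraph, namely arguing cleanly that a connected graph with exactly two vertices of degree at least $2$ must be a double star — that the two high-degree vertices are adjacent, that there are no additional edges among the pendant vertices, and hence that the graph is acyclic. Once this translation is in place, the rest is bookkeeping with the definitions of $P_n^{n-2}$ and $T_{n-d}^{d}$ together with a direct application of Proposition~\ref{prop:diameter1}. (A self-contained alternative would be to compute the bottleneck matrices of the two stars of a double star and apply Proposition~\ref{prop:gamma}, but that route is considerably more computational and offers no real advantage here.)
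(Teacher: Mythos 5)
Your proposal is correct and follows essentially the same route as the paper: the paper's proof simply observes that every graph in $\mathfrak{H}_{n,n-2}$ is a tree of diameter three and then invokes Proposition~\ref{prop:diameter1} with $d=2$ to get $T^{4}_{n-4}\simeq P_n^{n-2}$ as the unique maximizer. The structural argument you spell out (tree, double star, adjacency of the two internal vertices, and the identification $P_n^{n-2}\simeq T_{n-4}^{4}$) is exactly the detail the paper leaves implicit, so no further work is needed.
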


\begin{proof}
Any graph in $\mathfrak{H}_{n,n-2}$ is a tree with diameter three. By Proposition \ref{prop:diameter1}, the maximum algebraic connectivity is uniquely attained by the tree $T^{4}_{n-4}\simeq P_n^{n-2}$ over $\mathfrak{H}_{n,n-2}$.
\end{proof}

We next consider the question of minimizing the algebraic connectivity over $\mathfrak{H}_{n,k}$. We first prove
the result for  $k=1$ (and so $n\geq 4$). We denote by $C_3^{n-3}$ the graph obtained by adding a path of order $n-3$ to a vertex of $K_3$ (see Figure~\ref{The graph C3n-3}). Then $C_3^{n-3}$ is a member of $\mathfrak{H}_{n,1}$.

\begin{figure}[h]
\centering
\includegraphics[scale=.9]{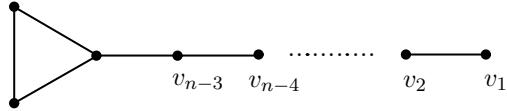}
\caption{The graph $C_3^{n-3}$}\label{The graph C3n-3}
\end{figure}

\begin{theorem}\label{thm:min1}
The graph $C_3^{n-3}$ uniquely attains the minimum algebraic connectivity over $\mathfrak{H}_{n,1}$.
\end{theorem}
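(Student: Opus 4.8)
The plan is to characterize the minimizer among all connected graphs of order $n$ with exactly one pendant vertex. Any $G\in\mathfrak{H}_{n,1}$ consists of a unique pendant vertex $u$ attached by an edge to a vertex $w$ of a connected graph $H$ on $n-1$ vertices, where $H$ has no pendant vertices (otherwise $G$ would have more than one pendant vertex, or $n-1<2$). Since $H$ is connected with minimum degree at least $2$, $H$ contains a cycle. My first step is to reduce to the case where $G$ is as ``sparse'' as possible: by Lemma~\ref{lem:edge}, adding edges only increases $\mu$, so among graphs in $\mathfrak{H}_{n,1}$ the minimum is attained by a graph whose only cycle structure is forced, i.e. I expect the minimizer to be unicyclic. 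Concretely, I would argue that any $G\in\mathfrak{H}_{n,1}$ contains (as a spanning subgraph with the same pendant vertex) a unicyclic graph in $\mathfrak{H}_{n,1}$, so it suffices to minimize over unicyclic graphs with exactly one pendant vertex.

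The second step is to push the structure further using the grafting and path-lengthening machinery. A unicyclic graph in $\mathfrak{H}_{n,1}$ is a cycle $C_g$ with trees hanging off it, and exactly one of those trees is nontrivial (it must be a path, since only one pendant vertex is allowed, and it must hang off a single vertex of the cycle). So $G$ is a cycle $C_g$ with a pendant path $P_{n-g}$ attached at one vertex; call this graph $C_g^{n-g}$. Now I would show two things: (a) among the $C_g^{n-g}$, the girth should be as small as possible, i.e. $g=3$; and (b) lengthening the cycle at the expense of the pendant path increases $\mu$. For (a)/(b) I would use Proposition~\ref{prop:gkl} together with Proposition~\ref{prop:gamma}: the edge joining the pendant path to the cycle is not on any cycle, so the characteristic set lies on that edge (or in the path), and the relevant Perron value is that of the pendant path, which by Proposition~\ref{prop:bottleneck} is governed only by the path length $n-g$. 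Comparing $C_g^{n-g}$ with $C_{g+1}^{n-g-1}$ amounts to comparing bottleneck matrices: shortening the pendant path (increasing $g$) strictly decreases the Perron value of the path component and, after balancing via Proposition~\ref{prop:gamma}, strictly increases $\mu$. Hence $\mu(C_3^{n-3})\le\mu(C_g^{n-g})$ for all $g\ge3$, with equality only when $g=3$.

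The third step handles the possibility that the cycle has length $3$ but the extra edges of a general unicyclic or non-unicyclic $G$ are distributed differently — e.g. the pendant path could be broken into a ``caterpillar on the cycle'' or the non-pendant part $H$ could be $2$-connected but not a cycle. Here the key point is again Lemma~\ref{lem:edge}: any such $G$ dominates (edge-wise, after relabeling) a graph obtained from $C_3^{n-3}$ by adding edges, so $\mu(G)\ge\mu(C_3^{n-3})$, and I would track the equality case by invoking Theorem~\ref{thm:equality} (equality in Lemma~\ref{lem:edge} only when the added edges are among pendant-type vertices at a non-cut-vertex, which cannot happen here since $C_3^{n-3}$ has its unique pendant vertex at the end of a long path, not adjacent to a non-cut-vertex with siblings).

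The main obstacle I anticipate is the strict uniqueness claim rather than the inequality: I must rule out every other $G\in\mathfrak{H}_{n,1}$ achieving the same minimum. This requires a careful case analysis of where the single long ``tail'' can attach and how the cyclic part can look, and at each stage showing the inequality in Proposition~\ref{prop:gkl} or in the $\ll$-comparison of bottleneck matrices is \emph{strict}. The cleanest route is: first show via Proposition~\ref{prop:gamma} that any minimizer must have its characteristic set on the edge incident to the pendant vertex's path (forcing a long induced path, hence all the ``mass'' concentrated in one tail), then show the cyclic remainder must be exactly $K_3$ by a direct bottleneck-matrix domination argument, using irreducibility to get strictness. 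I would expect the final write-up to lean on Remark~\ref{rem:one}-style explicit computations only to pin down small cases ($n=4$) and otherwise proceed structurally.
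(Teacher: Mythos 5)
Your outline has two genuine gaps, both in the reduction steps. First, the claim that every $G\in\mathfrak{H}_{n,1}$ contains a spanning unicyclic subgraph that is still in $\mathfrak{H}_{n,1}$ is false: take two triangles sharing a vertex with a pendant path attached (a ``bowtie'' with a tail). Every edge whose deletion destroys a cycle there creates new pendant vertices, so you cannot stay inside $\mathfrak{H}_{n,1}$ while sparsifying, and your step-2 structural description (``cycle plus one pendant path'') therefore does not cover all the graphs you need. Second, the fallback in your step 3 --- that any remaining $G$ edge-dominates a graph obtained from $C_3^{n-3}$ by adding edges --- is also false: a triangle-free member of $\mathfrak{H}_{n,1}$ (e.g.\ $C_{n-1}$ with one pendant vertex) contains no copy of $C_3^{n-3}$ at all. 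In addition, your step-2 monotonicity argument comparing $C_g^{n-g}$ with $C_{g+1}^{n-g-1}$ via Proposition~\ref{prop:gamma} is not justified as stated: when you increase $g$ the pendant-path bottleneck matrix shrinks, but the cycle component grows, so the effect on $\mu$ is not one-sided by that mechanism; moreover the hypothesis of Proposition~\ref{prop:gamma} (the non-cycle edge carrying the characteristic set, with the two prescribed Perron components) can fail when the cycle is long relative to the tail, exactly the case $C_{n-1}$ with one pendant vertex.

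The missing idea is that there is no need to stay inside $\mathfrak{H}_{n,1}$, or even among unicyclic graphs, when passing to a comparison graph: since only a lower bound on $\mu(G)$ is needed, one may delete cycle edges freely (Lemma~\ref{lem:edge}) and land on a tree $G_1$, chosen non-isomorphic to $P_n$ and to $T(2,1,n-3)$. Grafting (Proposition~\ref{prop:gkl}) and the fixed-diameter minimizer result (Proposition~\ref{prop:diameter}) then give the strict chain $\mu\bigl(T(2,1,n-3)\bigr)<\mu(G_1)\leq\mu(G)$. The decisive step, which your write-up only gestures at and partly misstates, is Theorem~\ref{thm:equality}: $C_3^{n-3}$ is obtained from $T(2,1,n-3)$ by joining the two pendant vertices attached to an end vertex of a path (that end vertex is not a cut-vertex of the path), so $\mu\bigl(C_3^{n-3}\bigr)=\mu\bigl(T(2,1,n-3)\bigr)$. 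This equality simultaneously transfers the tree bound back to the unicyclic target and delivers the strictness needed for uniqueness; without it, your plan has no way to relate $\mu(C_3^{n-3})$ to the tree comparisons, and the small cases $n=4,5$ still require the explicit numerical checks ($\mu(C_3^2)\approx 0.5188<0.8299\approx\mu(C_4^1)$) that the paper carries out.
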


\begin{proof}
For $n=4,$ $C_3^1$ is the only graph in $\mathfrak{H}_{4,1}$. For $n=5,$ $C_4^1$ (adding a pendant vertex adjacent to a vertex of a $4$-cycle) is the only graph in $\mathfrak{H}_{5,1}$, non-isomorphic to $C_3^2$, with least possible edges. The other graphs, non-isomorphic to $C_3^2$, in $\mathfrak{H}_{5,1}$ can be obtained from $C_4^1$ by joining some non-adjacent vertices. So, for any other graph $G$ on $5$ vertices and with one pendant vertex,
$$\mu(G) \ge \mu(C_4^1) \approx 0.8299  > 0.5188 \approx \mu(C_3^2).$$

Let $n\geq 6$. Any graph in $\mathfrak{H}_{n,1}$ has at least one cycle. Let $G$ be a graph in $\mathfrak{H}_{n,1}$ which is not isomorphic to $C_3^{n-3}$. Delete some of the edges from the cycles of $G$ to get a tree $G_1$. Further, we delete these edges in such a way that $G_1$ is neither isomorphic to the path $P_n$ nor to the graph $G_2$
($G$ is not isomorphic to $C_3^{n-3}$) as shown in Figure \ref{The tree G_2}.
\begin{figure}[h]
\centering
\includegraphics[scale=.9]{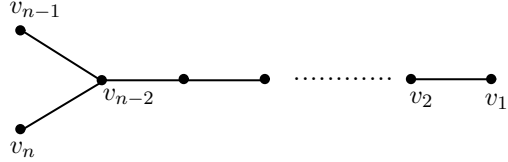}
\caption{The tree $G_2$}\label{The tree G_2}
\end{figure}
This is possible since $n\geq 6$. By Lemma \ref{lem:edge}, $\mu(G_1)\leq \mu(G)$. Starting with $G_1$, by a finite sequence of graph operations
consisting of grafting of edges, we can get the tree $G_2$ from $G_1$.
Also note that we must get the graph $\widehat{G}_2$ as shown in Figure \ref{The tree G2hat} in the penultimate step of getting $G_2$.
\begin{figure}[h]
\centering
\includegraphics[scale=.9]{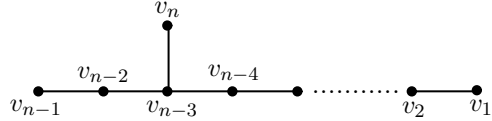}
\caption{The tree $\widehat{G}_2$}\label{The tree G2hat}
\end{figure}
Since $G_2\simeq T(2,1,n-3)$, $\mu(G_2)<\mu(\widehat{G}_2)$ by Proposition \ref{prop:diameter} and  $\mu(\widehat{G}_2)\leq\mu(G_1)$ by Proposition \ref{prop:gkl}. Now, the graph $C_3^{n-3}$ is obtained from $G_2$ by adding the edge $\{v_{n-1}, v_n\}$. By Theorem
\ref{thm:equality}, $\mu(C_3^{n-3})=\mu(G_2)$ and it follows that $\mu\left(C_3^{n-3}\right)<\mu(G)$. This completes the proof.
\end{proof}

We now consider the case $k\geq 2$. Recall that the tree $T\left(\lceil \frac{k}{2} \rceil,\lfloor \frac{k}{2}\rfloor, n-k\right)$ of order $n$ has $k$ pendant vertices and diameter $n-k+1$.

\begin{theorem}\label{thm:min}
The tree $T\left(\lceil \frac{k}{2} \rceil,\lfloor \frac{k}{2}\rfloor, n-k\right)$ uniquely attains the minimum algebraic connectivity over $\mathfrak{H}_{n,k}$, $k\geq 2$.
\end{theorem}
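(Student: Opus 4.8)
The plan is to show that for any graph $G\in\mathfrak{H}_{n,k}$ with $k\ge 2$, we have $\mu(G)\ge\mu\bigl(T(\lceil k/2\rceil,\lfloor k/2\rfloor,n-k)\bigr)$ with equality only for that tree. The reduction has two stages. First, using Lemma~\ref{lem:edge} one may delete edges lying on cycles of $G$ to obtain a spanning tree $T$ with $\mu(T)\le\mu(G)$; one must be careful to do this so that $T$ still has exactly $k$ pendant vertices (or at least no fewer — but Lemma~\ref{lem:addvertex}-type monotonicity and a pendant-count bookkeeping argument will let us reduce to a tree with exactly $k$ pendant vertices, since merging/creating pendants can only decrease $\mu$ further via Proposition~\ref{prop:gkl}-style grafting). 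Thus it suffices to prove the statement restricted to trees of order $n$ with $k$ pendant vertices.

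Second, among trees with $k$ pendant vertices, I would run a grafting/diameter argument. A tree with $k$ pendant vertices has diameter at most $n-k+1$ (equality when it is a ``double broom''); by Proposition~\ref{prop:diameter}, among trees of order $n$ with a \emph{fixed} diameter $d+1$ the minimizer is the double broom $T(\lceil\frac{n-d}{2}\rceil,\lfloor\frac{n-d}{2}\rfloor,d)$. So the two competing effects are: larger diameter tends to decrease $\mu$ (good for us), but a tree of large diameter with $k$ pendant vertices is forced to be ``thin'', and the extremal double broom $T(\lceil\frac{n-d}{2}\rceil,\lfloor\frac{n-d}{2}\rfloor,d)$ has $n-d$ pendant vertices, which equals $k$ precisely when $d=n-k$. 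Hence the natural candidate is $T(\lceil k/2\rceil,\lfloor k/2\rfloor,n-k)$, which is simultaneously the unique diameter-minimizer for its diameter class and has exactly $k$ pendant vertices. The remaining work is to show that we cannot do better by using a \emph{smaller} diameter: if $T'$ is a tree with $k$ pendant vertices and diameter $d'+1<n-k+1$, I would exhibit a sequence of edge-grafting moves (each governed by Proposition~\ref{prop:gkl}, which only decreases $\mu$ when we move mass from a shorter pendant path to a longer one) transforming $T'$ into a tree with strictly larger diameter and still $k$ pendant vertices, eventually reaching $T(\lceil k/2\rceil,\lfloor k/2\rfloor,n-k)$ with $\mu$ non-increasing, and strictly decreasing at some step unless $T'$ was already that tree.

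For the uniqueness / strictness, I expect to lean on the ``$\ll$'' machinery: whenever $G$ is not the claimed extremal tree, some grafting or edge-deletion step is strict, giving $\mu(G)>\mu(T(\lceil k/2\rceil,\lfloor k/2\rfloor,n-k))$. Concretely, after reducing to a tree, if the tree is not a double broom it has an internal vertex of degree $\ge 3$ other than the two ``hub'' vertices, and grafting strictly decreases $\mu$; and among double brooms $T(a,b,d)$ with $a+b=k$, $a+b+d=n$, fixing $d=n-k$ is forced, and the balanced split $a=\lceil k/2\rceil$, $b=\lfloor k/2\rfloor$ is the strict minimizer by Proposition~\ref{prop:diameter}. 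One will also want the small-order base cases handled separately, as in the proof of Theorem~\ref{thm:min1}, since the grafting chains need enough room ($n-k\ge$ some small constant) to be executed without collapsing the pendant count.

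The main obstacle is the pendant-vertex bookkeeping during the cycle-deletion and grafting steps: Lemma~\ref{lem:edge} and Proposition~\ref{prop:gkl} control $\mu$ but can change the number of pendant vertices, and one must verify that every move can be arranged to \emph{preserve} exactly $k$ pendant vertices (or that any deviation is itself handled by an extra monotone step). A clean way to manage this is to first argue that it suffices to consider trees whose internal (non-pendant) vertices form a path — i.e., ``caterpillar-free'' reductions — and then to collapse all pendant vertices onto the two ends of that path by repeated grafting, which is exactly the situation Proposition~\ref{prop:gkl} was designed for; the balancing of the two ends is then the final application of Proposition~\ref{prop:diameter}.
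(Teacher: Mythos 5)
Your overall strategy (delete cycle edges via Lemma \ref{lem:edge}, then graft via Proposition \ref{prop:gkl}, then invoke the fixed-diameter extremal result Proposition \ref{prop:diameter}) is the same as the paper's, but as written it has a genuine gap in the uniqueness part, and it also saddles itself with an unnecessary constraint that you yourself flag as the ``main obstacle.'' The uniqueness gap: all the monotonicity tools you cite are non-strict. Proposition \ref{prop:gkl} only gives $\mu(G_{k-1,l+1})\leq\mu(G_{k,l})$, and strictness of the edge-deletion step is simply false in general --- Theorem \ref{thm:equality} of this very paper produces equality cases (e.g.\ $\mu(C_3^{n-3})=\mu(T(2,1,n-3))$), so the claim ``whenever $G$ is not the extremal tree, some grafting or edge-deletion step is strict'' is unsupported. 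Worse, your plan is to drive an arbitrary tree with $k$ pendant vertices all the way \emph{to} $T(\lceil k/2\rceil,\lfloor k/2\rfloor,n-k)$ by grafting; if every step is only $\leq$, you end with $\mu(G)\geq\mu(T(\lceil k/2\rceil,\lfloor k/2\rfloor,n-k))$ and no strict inequality anywhere. The paper gets strictness from a different mechanism: it steers the reduction so that the intermediate tree has diameter exactly $n-k+1$ but is \emph{not} isomorphic to the double broom, and then the uniqueness clause of Proposition \ref{prop:diameter} alone supplies the strict inequality $\mu(T(\lceil k/2\rceil,\lfloor k/2\rfloor,n-k))<\mu(\widetilde{G})\leq\mu(\widehat{G})\leq\mu(G)$. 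To repair your argument you would either need strict versions of the grafting lemma (not available from the cited results) or adopt this ``avoid the target, then apply the uniqueness in Proposition \ref{prop:diameter}'' device.

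The second issue is the pendant-count bookkeeping. Preserving exactly $k$ pendant vertices is not needed, and in fact cannot always be arranged during cycle-edge deletion: if a cycle of $G$ contains at most one vertex of degree $\geq 3$, then every edge of that cycle you could delete has an endpoint of degree $2$, so a new pendant vertex is unavoidably created. The paper's observation is that this does not matter: any spanning tree obtained this way has \emph{at least} $k$ pendant vertices, hence diameter at most $n-k+1$; after grafting up to diameter exactly $n-k+1$, Proposition \ref{prop:diameter} is applied over \emph{all} trees of order $n$ with that diameter, so the pendant count of the intermediate trees is irrelevant. Dropping the ``exactly $k$'' requirement removes the obstacle you describe (the caterpillar-reduction and leaf-absorption machinery), and is the key simplification your proposal is missing.
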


\begin{proof}
The result is obvious for $k=2$. Let $k\geq 3$ and $G$ be a graph in $\mathfrak{H}_{n,k}$ which is not isomorphic to $T\left(\lceil \frac{k}{2} \rceil,\lfloor \frac{k}{2}\rfloor, n-k\right)$. If $G$ is a tree then the result follows
from Proposition~\ref{prop:diameter}. So, let us assume that $G$ is not a tree.
Now, carefully remove some of the edges from the cycles of $G$, to get a tree $\widehat{G}$
which is non-isomorphic to $T\left(\lceil \frac{k}{2} \rceil,\lfloor \frac{k}{2}\rfloor, n-k\right).$ Since $\widehat{G}$ has at least $k$ pendant vertices, it has diameter at most $n-k+1.$ If the diameter of $\widehat{G}$ is strictly less than $n-k+1,$ then form a new tree $\widetilde{G}$ of diameter $n-k+1$ from $\widehat{G}$ by grafting of edges such that $\widetilde{G}$ is not isomorphic to $T\left(\lceil \frac{k}{2} \rceil,\lfloor \frac{k}{2}\rfloor, n-k\right)$. Now, Lemma \ref{lem:edge} and Propositions \ref{prop:diameter} and \ref{prop:gkl} complete the proof.
\end{proof}

Let $\mathfrak{T}_{n,k}$ denote the subclass of $\mathfrak{H}_{n,k}$ consisting of all trees of order $n$ with $k$, $2\leq k\leq n-1$, pendant vertices. By Theorem \ref{thm:min}, the tree $T(\lceil \frac{k}{2} \rceil,\lfloor \frac{k}{2}\rfloor, n-k)$ has the minimum algebraic connectivity over $\mathfrak{T}_{n,k}$. Now the following natural question arises: Which tree attains the maximum algebraic connectivity over $\mathfrak{T}_{n,k}$? We answer this question in Theorem \ref{thm:treemax} below.

Let $q=\lfloor \frac{n-1}{k} \rfloor$ and $n-1=kq+r,\; 0\leq r\leq k-1$. Let $T_{n,k}$ be the tree obtained by adding $r$ paths each of length $q$ and $k-r$ paths each of length $q-1$ to a given vertex. Then $T_{n,k}$ is a member of $\mathfrak{T}_{n,k}$.

\begin{figure}[h]
\centering
\includegraphics[scale=.9]{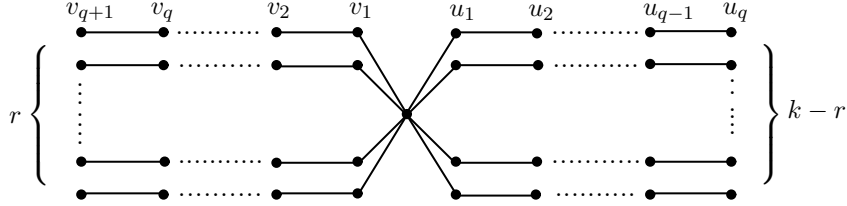}
\caption{The tree $T_{n,k}$}\label{starpath tree}
\end{figure}

\begin{theorem}\label{thm:treemax}
The tree $T_{n,k}$ uniquely attains the maximum algebraic connectivity over $\mathfrak{T}_{n,k}$.
\end{theorem}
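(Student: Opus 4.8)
The plan is to reduce an arbitrary tree $T\in\mathfrak{T}_{n,k}$ to $T_{n,k}$ by a finite sequence of operations, each of which does not decrease the algebraic connectivity, and each of which strictly increases it unless the tree is already $T_{n,k}$. The natural tool is Proposition~\ref{prop:gkl} (grafting an edge): if $G$ is a tree, $v$ a vertex of $G$, and two pendant paths of lengths $k\ge 1$ and $l\ge k$ are attached at $v$, then moving one edge from the shorter path to the longer one \emph{decreases} $\mu$. Read in reverse, transferring an edge from a \emph{longer} pendant path to a \emph{shorter} one at the same vertex does not decrease $\mu$, and one expects it to strictly increase $\mu$ unless the two path lengths already differ by at most one. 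So the first task is to argue that any tree with $k$ pendant vertices can, by repeated such transfers, be brought to a tree in which all $k$ pendant paths emanate from a common vertex with lengths as equal as possible, i.e. to $T_{n,k}$.

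First I would set up the structure of a tree with exactly $k$ pendant vertices: such a tree has at most $k-2$ vertices of degree $\ge 3$ (branch vertices), and deleting the pendant edges and contracting degree-two vertices exposes the underlying topological tree. The strategy is to show that if $T$ has two distinct branch vertices, one can apply a grafting move (in the $\mu$-nondecreasing direction) that reduces the number of branch vertices, or reduces the total ``spread'' of the configuration, without changing $n$ or $k$; iterating, we reach a tree all of whose pendant paths start at one vertex $c$ — a ``spider'' $S(\ell_1,\dots,\ell_k)$ with $\sum \ell_i = n-1$. Here the main care is to choose the grafting vertex and the two pendant paths so that Proposition~\ref{prop:gkl} applies with $l\ge k$ and so that the move genuinely simplifies the tree; this is the step I expect to be the main obstacle, since a naive move could create a new branch vertex elsewhere, and one needs a monotone potential (e.g. number of branch vertices, broken ties by something like $\sum_i \ell_i^2$ or total path length from $c$) that strictly decreases under each move until we land on $T_{n,k}$. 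Equality analysis in Proposition~\ref{prop:gkl} — strictness of $\mu(G_{k-1,l+1})<\mu(G_{k,l})$ when $l>k$, say — must be invoked here, and may require a short separate argument using the Perron-component/bottleneck-matrix machinery (Propositions~\ref{prop:bottleneck}, \ref{prop:alg}, \ref{prop:perron}): if grafting did not change $\mu$, the characteristic set would have to sit in a very restricted place, forcing the two path lengths to be nearly equal.

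Once reduced to a spider $S(\ell_1,\dots,\ell_k)$ centred at $c$, I would finish by a balancing argument entirely within spiders. If some two legs have lengths $\ell_i \ge \ell_j + 2$, apply Proposition~\ref{prop:gkl} at $v=c$ with the two pendant paths being leg $i$ (the longer, playing the role of the length-$l$ path) and leg $j$ (length $k$): since $\ell_i \ge \ell_j$, transferring the terminal edge of leg $i$ onto leg $j$ does not decrease $\mu$, and strictly increases it because $\ell_i > \ell_j$ (equality in the proposition would force $\ell_i=\ell_j$). Each such move decreases $\sum_t \ell_t^2$ by a positive integer amount while keeping $\sum_t \ell_t = n-1$ fixed, so after finitely many steps all legs have lengths differing by at most one; writing $n-1 = kq+r$ with $0\le r\le k-1$, the unique such multiset of leg lengths is $r$ copies of $q$ and $k-r$ copies of $q-1$, i.e. the spider is exactly $T_{n,k}$. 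Chaining the two reductions, $\mu(T) < \mu(T_{n,k})$ for every $T\in\mathfrak{T}_{n,k}$ not isomorphic to $T_{n,k}$, which is the claim.

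A remark on what must be checked carefully for \emph{uniqueness}: every reduction step above is strict (raises $\mu$) unless the tree is already $T_{n,k}$, so I must make sure that no step is vacuous — in particular that starting from a tree $T\not\simeq T_{n,k}$ we can always find either a branch-reducing graft or a leg-balancing graft that is strict. The only way the whole chain could be non-strict is if $T$ were already a balanced spider, i.e. $T\simeq T_{n,k}$; hence the maximiser is unique. I would state this as a short closing paragraph after the two lemmas, invoking Lemma~\ref{lem:edge} only if one first wants to dispose of the hypothesis that $T$ is a tree (but here the class $\mathfrak{T}_{n,k}$ already consists of trees, so Lemma~\ref{lem:edge} is not needed and Proposition~\ref{prop:gkl} does all the work).
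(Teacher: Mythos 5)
Your plan stands or falls on its first reduction -- transforming an arbitrary $T\in\mathfrak{T}_{n,k}$ into the balanced spider $T_{n,k}$ by a chain of $\mu$-nondecreasing grafting moves -- and that reduction cannot be carried out with Proposition~\ref{prop:gkl}. The $\mu$-nondecreasing direction of that proposition is exactly the balancing move: replace two pendant paths attached at the \emph{same} vertex $v$, whose lengths differ by at least two, by paths whose lengths are one unit closer. Such a move never changes the branch structure of the tree (if both paths stay nonempty the degree of every vertex is unchanged); the only grafting moves that eliminate a branch vertex are those that empty a pendant path, and they run in the $\mu$-\emph{decreasing} direction. Concretely, take the double broom $T(2,2,n-4)\in\mathfrak{T}_{n,4}$, $n\geq 6$: every maximal pendant path has length one, the only vertices carrying two pendant paths carry two paths of equal length one, and no vertex carries a pendant path of length at least two. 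Hence there is \emph{no} nontrivial $\mu$-nondecreasing graft available at all (not even one that temporarily leaves $\mathfrak{T}_{n,k}$ by splitting off a new pendant), yet $T(2,2,n-4)\not\simeq T_{n,4}$. So the step you yourself flag as ``the main obstacle'' is not merely delicate; with the moves you allow it is unreachable, and no potential-function bookkeeping can repair it. A second, smaller gap is that Proposition~\ref{prop:gkl} only gives a weak inequality, so the strictness needed for uniqueness is nowhere actually established; your spider-balancing step is fine as far as it goes, but even there the strict increase is only asserted.

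For comparison, the paper proves the theorem without any move-by-move reduction. It observes that $d_k$, the diameter of $T_{n,k}$, is the minimum possible diameter in $\mathfrak{T}_{n,k}$ and that $T_{n,k}$ is the unique tree attaining it; for any other $T$ of diameter $\beta\geq d_k+1$ it invokes the bound $\mu(T)\leq 2\left(1-\cos\frac{\pi}{\beta+1}\right)$ of \cite{gr2}, while a Perron-component computation at the centre of $T_{n,k}$ (Proposition~\ref{prop:bottleneck}, Proposition~\ref{prop:perron} and Remark~\ref{rem:one}) gives $\mu(T_{n,k})\geq 2\left(1-\cos\frac{\pi}{d_k+1}\right)$, with strict inequality in the right cases to yield uniqueness. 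If you want to salvage a moves-based argument you would need an operation that relocates whole branches toward the characteristic set (the ``collapsing'' of edges studied in \cite{patra}), not only grafting between two pendant paths at a fixed vertex.
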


\begin{proof}
For $k=2$, $T_{n,2}\simeq P_n$ is the only element in $\mathfrak{T}_{n,2}$ and the result is obvious. Assume that $k\geq 3$. Let $v$ be the vertex of $T_{n,k}$ of degree $k$. There are $k$ connected components of $T_{n,k}$ at $v$, and $r$ of them are paths of order $q+1$ and the rest $k-r$ components are paths of order $q$. Let $d_k$ be the diameter of $T_{n,k}$. Then
\begin{equation*}
d_k=\left\{\begin{array}{ll}
  2q & \text{ if }r=0 \\
  2q+1 & \text{ if }r=1\\
  2q+2 & \text{ if }2\leq r\leq k-1
\end{array}.\right.
\end{equation*}
Any tree in $\mathfrak{T}_{n,k}$ has diameter at least $d_k$, and equality holds if and only if the tree is isomorphic to $T_{n,k}$.

If $r=0$ then, by Proposition \ref{prop:perron}, each connected component of $T_{n,k}$ at $v$ is a Perron component
consisting of paths of order $q$ and
therefore by Remark~\ref{rem:one}, $\mu(T_{n,k})=2\left(1-\cos\frac{\pi}{2q +1}\right)$. If $r\geq 1$, then from Proposition \ref{prop:bottleneck} together with the Perron-Frobenius theory, it follows that the Perron components of $T_{n,k}$ at $v$ are the ones with $q+1$ vertices. So $\mu(T_{n,k})\geq 2\left(1-\cos\frac{\pi}{2q+3}\right)$ with strict inequality if $r=1$. Note that if $T$ is any tree of order $n$ with diameter $d$, then $\mu(T)\leq 2\left(1-\cos \frac{\pi}{d+1} \right)$ (\cite{gr2}, Corollary 4.4, p.234). Thus, for any tree $T$ in $\mathfrak{T}_{n,k}$ with diameter $\beta\geq d_k +1$, we have
$$\mu(T)\leq 2\left(1-\cos \frac{\pi}{\beta +1} \right)\leq 2\left(1-\cos\frac{\pi}{d_k +1}\right)\leq\mu(T_{n,k}).$$
Since the second inequality is strict if $r=0$ and $r\geq 2$, and the last inequality is strict if $r=1$, the uniqueness of $T_{n,k}$ having the maximum algebraic connectivity over $\mathfrak{T}_{n,k}$ follows. This completes the proof.
\end{proof}

\section{Without Pendant Vertex}\label{without}

Let $\mathcal{F}_n,n\geq 3,$ be the class of all connected graphs of order $n$ without any pendant vertex. The complete graph $K_n$ is a member of $\mathcal{F}_n$ and it has the maximum algebraic connectivity, namely $\mu(K_n)=n$. Here, our aim is to find the graph that has the minimum algebraic connectivity over $\mathcal{F}_n$. By Lemma \ref{lem:edge}, we consider only those graphs in $\mathcal{F}_n$ with minimum possible edges (if we delete any edge from such a graph, then the new graph is either disconnected or has atleast one pendant vertex). Note that any two cycles in such a graph are edge disjoint. For $n=3,4$, the cycle $C_n$ of order $n$ is the only such graph. In \cite{fi1}, Fiedler showed that $\mu(C_m)=2\left(1-\cos\frac{2\pi}{m}\right)$. So $\mu(C_3)=3$ and $\mu(C_4)=2$. For $n=5,$ there are two such graphs $G_1$ and $G_2$ in $\mathcal{F}_5$, up to isomorphism, where $G_1$ is $C_5$ and $G_2$ is the graph with two $3$-cycles having one common vertex. By Lemma \ref{lem:extra}$(ii)$, $\mu(G_2)=1$ and hence,
$$\mu(G_1) = 2\left(1-\cos\frac{2\pi}{5}\right) > 1 = \mu(G_2).$$

We next consider when $n\geq 6$. Let $C_{\lfloor \frac{n}{2}\rfloor,\lceil \frac{n}{2}\rceil}$ be the graph with two cycles $C_{\lfloor \frac{n+1}{2}\rfloor}$ and $C_{\lceil \frac{n+1}{2}\rceil}$ with exactly one common vertex. Then $C_{\lfloor \frac{n}{2}\rfloor,\lceil \frac{n}{2}\rceil}$ is a member of $\mathcal{F}_n$. We first prove the following.

\begin{figure}[h]
\centering
\includegraphics[scale=.9]{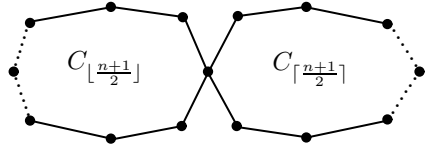}
\caption{The graph $C_{\lfloor \frac{n}{2}\rfloor,\lceil \frac{n}{2}\rceil}$}\label{twocircles:1}
\end{figure}

\begin{lemma}\label{lem:cn}
For $n\geq 6,$ $\mu\left(C_{\lfloor \frac{n}{2}\rfloor,\lceil \frac{n}{2}\rceil}\right)<\mu(C_n).$
\end{lemma}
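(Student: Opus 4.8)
The plan is to compare the two graphs by exploiting the cut-vertex structure of $C_{\lfloor n/2\rfloor,\lceil n/2\rceil}$ together with Proposition~\ref{prop:alg} and the bottleneck matrix machinery. Write $C = C_{\lfloor n/2\rfloor,\lceil n/2\rceil}$ and let $v$ be its unique cut-vertex, the common vertex of the two cycles. There are exactly two connected components of $C$ at $v$; call them $D_1$ and $D_2$, where $D_1$ comes from the cycle $C_{\lceil (n+1)/2\rceil}$ (the larger one) and $D_2$ from $C_{\lfloor (n+1)/2\rfloor}$. Each $D_i$ is a path with both endpoints having been joined to $v$, so its bottleneck matrix $\widehat L(D_i)^{-1}$ is a specific, explicitly computable matrix (it is the principal submatrix of $L$ of a path $P_{2m+1}$ at its center, up to indexing). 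The first step is therefore to identify $\mu(C)$ via Proposition~\ref{prop:perron}: if the two Perron values are equal (which happens precisely when $n$ is even, forcing $\lfloor(n+1)/2\rfloor=\lceil(n+1)/2\rceil$ — wait, one must check the parity carefully), $v$ is the characteristic vertex and $\mu(C)=1/\rho(\widehat L(D_1)^{-1})$; otherwise one uses Proposition~\ref{prop:alg}$(i)$ with the larger cycle's component as the Perron component.

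Next I would get a clean handle on $\mu(C_n)$. Here I would pick a vertex $w$ of $C_n$ and look at the single component $C_n - w$, which is a path $P_{n-1}$; but since $w$ is not a cut-vertex this is less convenient. A better route: recall $\mu(C_n) = 2(1-\cos\frac{2\pi}{n})$ is already quoted, and compare numerically against the explicit trigonometric or matrix-based expression for $\mu(C)$. The cleanest comparison uses the $\ll$ relation from the last paragraph of Section~\ref{preliminaries}: I want to exhibit a vertex $v$ in $C$ and a vertex $w$ in $C_n$ such that the relevant Perron component of $C$ at $v$ dominates (in the $\ll$ sense) the relevant component of $C_n$ at $w$, which by Proposition~\ref{prop:alg} and monotonicity of the Perron value would give $1/\mu(C) \geq 1/\mu(C_n)$, i.e. $\mu(C)\leq\mu(C_n)$, and then argue strictness. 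Concretely: in $C_n$, delete an edge $e$ on the cycle to get a path $P_n$ on vertices $w_1,\dots,w_n$; for an interior vertex $w_j$, the components of $P_n$ at $w_j$ are paths, and the relevant bottleneck matrix (from Proposition~\ref{prop:bottleneck}) is the ``staircase'' matrix $[\min(a,b)]$. I would match these against the bottleneck matrix of the larger component $D_1$ of $C$, which, because $D_1$ is a path whose \emph{both} ends attach to $v$, has entries $m_{ij}$ = number of common edges of the two $v$-paths, and these $v$-paths go ``around'' so are shorter — this is exactly the mechanism by which closing a path into a cycle \emph{decreases} Perron values and hence \emph{increases} algebraic connectivity. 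So heuristically $\mu(C)<\mu(C_n)$ because $C$ has a ``more cyclic'' (shorter-bottleneck) large component than the long path underlying $C_n$, while strictness comes from the cut-vertex of $C$ forcing $\mu(C)\leq 1 < \mu(C_n)$ once $n\geq 6$ — indeed Corollary~\ref{cor:one} gives $\mu(C)\leq 1$ immediately since $C$ has a cut-vertex, and $\mu(C_n)=2(1-\cos\frac{2\pi}{n})>1$ for $n\geq 6$.

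In fact that last observation short-circuits most of the work: for $n\geq 6$ we have $\frac{2\pi}{n}\leq\frac{\pi}{3}$, so $\cos\frac{2\pi}{n}\geq\frac12$, giving $\mu(C_n)=2(1-\cos\frac{2\pi}{n})\leq 1$ — which is the \emph{wrong} direction. So this cheap argument fails and the honest bottleneck-matrix comparison is genuinely needed. I would therefore carry out the following steps in order: (1) write $m=\lfloor(n+1)/2\rfloor$ and $m'=\lceil(n+1)/2\rceil$ so the two cycles of $C$ have lengths $m$ and $m'$ with $m+m'=n+1$; (2) compute $\rho(\widehat L(D_1)^{-1})$ for the larger component using Remark~\ref{rem:one}'s identification (a cycle $C_{m'}$ with one vertex removed, or rather the component at the cut-vertex, has bottleneck matrix whose Perron value I can write in closed form); (3) determine $\mu(C)$ by Proposition~\ref{prop:perron} or Proposition~\ref{prop:alg}$(i)$ according to parity; (4) establish the inequality $\mu(C)<\mu(C_n)$ by a direct $\ll$-domination argument between an appropriate component of $C_n$ (a path obtained by cutting the $n$-cycle at a well-chosen edge, viewed at a well-chosen vertex) and the corresponding component of $C$, invoking that $\widehat L(\cdot)^{-1}$ of $C_n$'s component strictly dominates that of $C$'s component entrywise after a suitable permutation.

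The main obstacle I expect is step (4): choosing the vertices and the edge-deletion so that the $\ll$ comparison between bottleneck matrices is literally true entrywise (with at least one strict inequality) and in the right direction. The subtlety is that $C_n$ has no cut-vertex, so Proposition~\ref{prop:alg} does not apply to $C_n$ directly; I will need to either first delete an edge of $C_n$ to create a pendant-free comparison (losing strictness control) or instead use the known value $\mu(C_n)=2(1-\cos\frac{2\pi}{n})$ and compare it against an explicit lower estimate $1/\rho(\widehat L(D_1)^{-1})$ for $\mu(C)$ derived from a clean upper bound on the Perron value of a "both-ends-attached path" component — essentially showing $\rho(\widehat L(D_1)^{-1}) > \frac{1}{2(1-\cos(2\pi/n))}$. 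Making that Perron-value estimate tight enough to beat the cosine expression for all $n\geq 6$ (and separately handling the small cases $n=6,7$ by direct computation if the general estimate is not sharp there) is where the real work lies.
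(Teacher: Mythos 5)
Your overall framework (work at the cut-vertex $v$ of $C:=C_{\lfloor \frac{n}{2}\rfloor,\lceil \frac{n}{2}\rceil}$, look at the two lobes $D_1$ (larger) and $D_2$ (smaller), use Propositions~\ref{prop:alg} and \ref{prop:perron}, split by parity) is the same as the paper's, but two genuine gaps remain, and they sit exactly where the real work is. First, the decisive computational fact is misidentified. Since \emph{both} endpoints of each lobe are adjacent to $v$, the principal submatrix $\widehat{L}(D_i)$ is the tridiagonal matrix with every diagonal entry equal to $2$, whose smallest eigenvalue is $2\bigl(1-\cos\frac{\pi}{m+1}\bigr)$ for a lobe of order $m$, so $\rho\bigl(\widehat{L}(D_i)^{-1}\bigr)=\frac{1}{2(1-\cos\frac{\pi}{m+1})}$. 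This is \emph{not} ``the principal submatrix of $L$ of a path at its center,'' nor is it covered by Remark~\ref{rem:one}: those matrices have a diagonal entry $1$ (a pendant end) and Perron value $\frac{1}{2(1-\cos\frac{\pi}{2m+1})}$, so carrying out your step (2) via Remark~\ref{rem:one} would produce the wrong closed form. (Also, the two lobes have equal order precisely when $n$ is \emph{odd}, not even; you flagged this but left it unresolved.)

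Second, your step (4) targets the wrong component in the unbalanced (even $n$) case. Proving $\rho\bigl(\widehat{L}(D_1)^{-1}\bigr)>\frac{1}{2(1-\cos\frac{2\pi}{n})}$ for the larger, Perron lobe gives, via Proposition~\ref{prop:alg}$(i)$ and $\lambda\bigl(\widehat{L}(D_1)^{-1}-xJ\bigr)\leq\rho\bigl(\widehat{L}(D_1)^{-1}\bigr)$, only the \emph{lower} bound $\mu(C)\geq 1/\rho\bigl(\widehat{L}(D_1)^{-1}\bigr)$, which cannot yield $\mu(C)<\mu(C_n)$. The needed upper bound comes from the smaller, non-Perron lobe: $\frac{1}{\mu(C)}=\rho\bigl(\widehat{L}(D_2)^{-1}\oplus[0]+xJ\bigr)>\rho\bigl(\widehat{L}(D_2)^{-1}\bigr)=\frac{1}{2(1-\cos\frac{2\pi}{n})}$, where $x>0$ because the two lobes' Perron values differ (equality at $x=0$ is impossible); this is exactly how the paper concludes $\mu(C)<2(1-\cos\frac{2\pi}{n})=\mu(C_n)$ for even $n$. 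For odd $n$ both lobes are Perron components, $v$ is a characteristic vertex, and Proposition~\ref{prop:perron} gives $\mu(C)=2(1-\cos\frac{2\pi}{n+1})<\mu(C_n)$ directly. With the correct closed forms, no $\ll$-comparison with a cut-open $C_n$ is needed -- and the one you sketch cannot work in the required direction anyway, since deleting an edge of $C_n$ only gives $\mu(P_n)\leq\mu(C_n)$ while in fact $\mu(C)>\mu(P_n)$. (Your retraction of the cheap bound $\mu(C)\leq 1<\mu(C_n)$ was correct.)
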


\begin{proof}
Let $v$ be the cut-vertex of $C_{\lfloor \frac{n}{2}\rfloor,\lceil \frac{n}{2}\rceil}$. Let $C_1$ and $C_2$ be the two connected components of $C_{\lfloor \frac{n}{2}\rfloor,\lceil \frac{n}{2}\rceil}$ at $v$ of orders $\lfloor \frac{n-1}{2}\rfloor$ and $\lceil \frac{n-1}{2} \rceil$, respectively. For $i=1,2$, observe that
{$$\widehat{L}(C_i)= \left[
\begin{matrix}
 2 & -1 & 0 & \cdots & 0 & 0\\
 -1 & 2 & -1 & 0 & \cdots & 0\\
 0 & -1 & 2 & -1 & \cdots\\
 \vdots& \ddots & \ddots & \ddots & \ddots & \vdots\\
 0 & \cdots & 0 & -1 & 2 & -1\\
 0 & \cdots & 0 & 0 & -1 & 2
\end{matrix}
\right].$$}
It is well known that the smallest eigenvalue of $\widehat{L}(C_1)$ is $2\left(1-\cos\frac{\pi}{\lfloor \frac{n+1}{2}\rfloor}\right)$ and that of $\widehat{L}(C_2)$ is  $2\left(1-\cos\frac{\pi}{\lceil \frac{n+1}{2}\rceil}\right)$. If $n$ is odd, then $\rho(\widehat{L}(C_1)^{-1})=\rho(\widehat{L}(C_2)^{-1})$ and hence $C_1$ and $C_2$ both are Perron components of $C_{\lfloor \frac{n}{2}\rfloor,\lceil \frac{n}{2}\rceil}$ at $v$. So by Proposition \ref{prop:perron}, $$\mu\left(C_{\lfloor \frac{n}{2}\rfloor,\lceil \frac{n}{2}\rceil}\right)=
2\left(1-\cos\frac{2\pi}{n+1}\right)<2\left(1-\cos\frac{2\pi}{n}\right)=\mu(C_n).$$
If $n$ is even, then $C_2$ is the only Perron component of $C_{\lfloor \frac{n}{2}\rfloor,\lceil \frac{n}{2}\rceil}$ at $v$ and it follows that
$$2\left(1-\cos\frac{2\pi}{n+2}\right)<\mu\left(C_{\lfloor \frac{n}{2}\rfloor,\lceil \frac{n}{2}\rceil}\right)<2\left(1-\cos\frac{2\pi}{n}\right)=\mu(C_n).$$
This completes the proof.
\end{proof}

\begin{figure}[h]
\centering
\includegraphics[scale=.9]{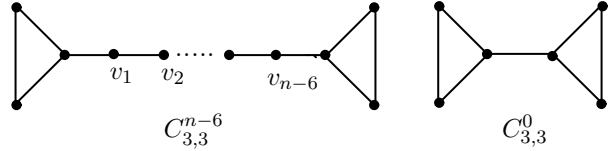}
\caption{The graph $C_{3,3}^{n-6}$}\label{twocircles:2}
\end{figure}

Now, take a path of order $n-6$ and add a $3$-cycle to each of its two pendant vertices. The new graph thus obtained is a member of $\mathcal{F}_n$. We denote it by $C_{3,3}^{n-6}$.

\begin{theorem}\label{lem:min0}
For $n\geq 6$, the graph $C_{3,3}^{n-6}$ uniquely attains the minimum algebraic connectivity over $\mathcal{F}_n$.
\end{theorem}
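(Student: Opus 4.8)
The plan is to argue by reducing an arbitrary graph $G\in\mathcal{F}_n$ to the candidate $C_{3,3}^{n-6}$ through a chain of inequalities that are never loss-making, exploiting the combination of edge-deletion (Lemma~\ref{lem:edge}), grafting (Proposition~\ref{prop:gkl}), and the ``completing a clique at a non-cut-vertex'' equality (Theorem~\ref{thm:equality}). First I would note, as in the paragraph preceding the theorem, that it suffices to treat the edge-minimal members of $\mathcal{F}_n$: every edge lies on a cycle, no two cycles share an edge, and deleting any edge either disconnects the graph or creates a pendant vertex. For such a $G$, removing appropriately chosen edges from all cycles produces a spanning tree $T$ with $\mu(T)\le\mu(G)$ by Lemma~\ref{lem:edge}; the point is that $G$ has at least two ``ends'' (leaf-blocks, i.e. cycles containing a unique cut-vertex), so $T$ can be arranged to have at least two leaves that came from distinct such cycles. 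I would then identify, as in the proofs of Theorems~\ref{thm:min1} and~\ref{thm:min}, a specific caterpillar-type tree $G_0$ — namely $T(2,2,n-4)$, the path $P_{n-4}$ with two extra pendant vertices at each end — with the property that $C_{3,3}^{n-6}$ is obtained from $G_0$ by adding one edge joining the two pendant vertices at $v_1$ and one edge joining the two pendant vertices at $v_{n-4}$.

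The core of the argument is: (1) show $\mu(G_0)\le\mu(T)$ for the tree $T$ produced above, and (2) show $\mu(C_{3,3}^{n-6})=\mu(G_0)$. For (2), apply Theorem~\ref{thm:equality} twice. The vertex $v_1$ of $G_0$ carries two pendant vertices; since $v_1$ is \emph{not} a cut-vertex of the subgraph $G_0-\{$those two pendants$\}$... — more carefully, I would peel off one end first: let $G_1$ be $G_0$ with the edge between the two pendants at $v_1$ added; since in $G_0$ the vertex $v_1$ already has two pendant neighbours and the rest of $G_0$ is attached to $v_1$ at a single vertex (so $v_1$ is not a cut-vertex of the graph obtained by deleting those two pendants), Theorem~\ref{thm:equality} with $t=2$, $k=1$ gives $\mu(G_1)=\mu(G_0)$. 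Repeating at the other end $v_{n-4}$ yields $\mu(C_{3,3}^{n-6})=\mu(G_1)=\mu(G_0)$. For (1), I would route through diameter and grafting exactly as in Theorem~\ref{thm:min1}: $G_0\simeq T(2,2,n-4)$ has diameter $n-4$ (the maximum diameter of a tree on $n$ vertices with at least $4$ leaves, roughly), and by Propositions~\ref{prop:diameter} and~\ref{prop:gkl} any tree $T$ on $n$ vertices with at least two ``doubled'' ends satisfies $\mu(G_0)\le\mu(T)$, with the fine cases ($T\simeq P_n$, or $T$ a tree with only one doubled end) handled separately — these are exactly the configurations that cannot arise from an edge-minimal non-tree $G\in\mathcal F_n$ with $n\ge 6$, since such a $G$ has two disjoint cycles or one cycle plus enough structure to force two genuine branch points after tree-ification.

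For the \emph{uniqueness}, I would track where the inequalities can be tight: Lemma~\ref{lem:edge} is strict unless no edges were deleted (i.e. $G$ was already a tree, contradiction), Proposition~\ref{prop:gkl} is strict when grafting genuinely shortens a longer branch, and Proposition~\ref{prop:diameter}'s extremal tree is unique. The only way to achieve equality throughout is for $G$ to already be $C_{3,3}^{n-6}$ up to isomorphism. I expect the main obstacle to be the bookkeeping in step~(1): one must be careful that an \emph{arbitrary} edge-minimal $G\in\mathcal F_n$ really does tree-ify (with suitable edge choices) to a tree that dominates $G_0=T(2,2,n-4)$ in algebraic connectivity, ruling out the low-diameter ``theta-like'' or ``flower'' configurations — this requires a small case analysis on the block structure of $G$ (number of cycles, their lengths, how the cut-vertices are arranged), together with Lemma~\ref{lem:cn} to dispose of the single-cycle case $G=C_n$ via the intermediate graph $C_{\lfloor n/2\rfloor,\lceil n/2\rceil}$, and then a further reduction from two long cycles sharing a vertex down to two triangles joined by a path. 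The verification that $\mu(C_{3,3}^{n-6})$ genuinely is \emph{smaller} than $\mu(C_{\lfloor n/2\rfloor,\lceil n/2\rceil})$ and than $\mu$ of the two-triangles-sharing-a-vertex graph $C_{3,3}^{n-6}$'s competitors will again come from a grafting/diameter comparison rather than any explicit eigenvalue computation.
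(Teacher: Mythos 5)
Your overall route is the same as the paper's: delete one edge from each cycle to pass to a tree (Lemma \ref{lem:edge}), graft edges to push the diameter up to $n-3$ (Proposition \ref{prop:gkl}), compare with the extremal tree $T(2,2,n-4)$ via Proposition \ref{prop:diameter}, transfer back to $C_{3,3}^{n-6}$ by (two applications of) Theorem \ref{thm:equality}, and dispose of the one remaining case $C_n$ with Lemma \ref{lem:cn}. (A small slip: the diameter of $T(2,2,n-4)$ is $n-3$, not $n-4$; also the paper does not need edge-minimality in the proof, only maximum degree at least three, but that difference is cosmetic.)

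The genuine gap is in your uniqueness argument. You claim that Lemma \ref{lem:edge} is strict unless no edges were deleted; this is false --- Theorem \ref{thm:equality} is exactly a family of equality cases, and the identity $\mu(T(2,2,n-4))=\mu(C_{3,3}^{n-6})$ that you invoke in the same proof is itself a counterexample (deleting the two triangle edges of $C_{3,3}^{n-6}$ does not change $\mu$). Consequently, for a graph $G\not\simeq C_{3,3}^{n-6}$ whose tree-ification happens to come out as $T(2,2,n-4)$ (for instance, if the edges removed from the two cycles leave precisely that caterpillar), every inequality in your chain may be an equality as far as your correct ingredients go, and you only obtain $\mu(C_{3,3}^{n-6})\le\mu(G)$. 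The paper gets strictness from a different place: it \emph{chooses} the deleted edges, and the subsequent graftings, so that the resulting tree of diameter $n-3$ is not isomorphic to $T(2,2,n-4)$ (possible because $G\not\simeq C_{3,3}^{n-6}$ and $n\ge 6$), and then the uniqueness clause of Proposition \ref{prop:diameter} yields the strict inequality $\mu(T(2,2,n-4))<\mu(G_3)\le\mu(G)$. Your proposal never imposes this non-isomorphism requirement on the intermediate tree, and without it (or some substitute source of strictness) the uniqueness of the minimizer is not established.
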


\begin{proof}
In $\mathcal{F}_n$, $C_n$ is the only graph whose maximum vertex degree is two. Since we are minimizing the algebraic connectivity over $\mathcal{F}_n$, by Lemma \ref{lem:cn} we may consider graphs with maximum vertex degree at least three. Let $G_1$ be such a graph in $\mathcal{F}_n$ not isomorphic to $C_{3,3}^{n-6}$. Then $G_1$ has at least two cycles. Form a tree $G_2$ by deleting one edge from each cycle of $G_1$ in such a way that $G_2$ has diameter at most $n-3$ and if equality holds, then $G_2$ is not isomorphic to $T(2,2,n-4)$ (see Figure \ref{The tree T(2,2,n-4)}). By Lemma \ref{lem:edge}, $\mu(G_2)\leq \mu(G_1).$
\begin{figure}[h]
\centering
\includegraphics[scale=.9]{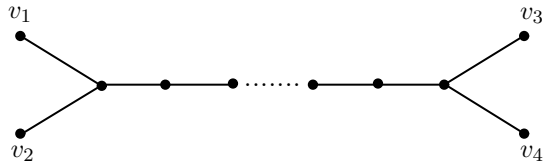}
\caption{The tree $T(2,2,n-4)$}\label{The tree T(2,2,n-4)}
\end{figure}
If the diameter of $G_2$ is less than $n-3,$ then form a new tree $G_3$ from $G_2$, by grafting of edges,
such that the diameter of $G_3$ is $n-3$ and $G_3$ is not isomorphic to $T(2,2,n-4)$. By Proposition \ref{prop:gkl}, $\mu(G_3)\leq \mu(G_2)$. By Proposition \ref{prop:diameter}, $\mu(T(2,2,n-4))< \mu(G_3)$. Now, $C_{3,3}^{n-6}$ is isomorphic to the graph obtained from $T(2,2,n-4)$ by adding two new edges $\{v_1,v_2\}$ and $\{v_3,v_4\}$. Since  $\mu(T(2,2,n-4))=\mu\left(C_{3,3}^{n-6}\right)$ by Theorem \ref{thm:equality}, it follows that $\mu\left(C_{3,3}^{n-6}\right)< \mu(G_1)$. This completes the proof.
\end{proof}

\section{Unicyclic Graphs}\label{unicyclic}

The algebraic connectivity of unicyclic graphs of order $n$ with fixed girth has been extensively studied by Fallat et al. \cite{fal,fal2,fal3}. Let $\mathcal{U}_n, \;n\geq 3,$ denote the class of all unicyclic graphs of order $n$. Here we discuss the question of extremizing the algebraic connectivity over $\mathcal{U}_n$. For $n=3$, $\mathcal{U}_n$ contains only one graph. So we assume that $n\geq 4$.

First consider the maximizing case.
For the cycle $C_n$ of order $n$, we have $\mu(C_n)> 1$ if and only if $n\leq 5$, and $\mu(C_n)= 1$ if and only if $n=6$.
Also, a graph in $\mathcal{U}_n$, which is not isomorphic to $C_n$, has at least one cut-vertex and hence it has algebraic connectivity at most one (Corollary \ref{cor:one}). Again note that the graph $P_n^{n-3}$ (see Figure \ref{The graph PNK}) is a member of $\mathcal{U}_n$ and $\mu(P_n^{n-3})=1$ (Lemma \ref{lem:extra}$(ii)$).
Therefore, if $n \le 6$, we have the following: \\
 for $n\leq 5$, clearly $C_n$ is the only graph maximizing the algebraic connectivity over $\mathcal{U}_n$ and for $n=6$, $C_6$ and $P_6^{3}$ are two non-isomorphic graphs in $\mathcal{U}_n$ having algebraic connectivity one.

Now, suppose that $n\geq 6$ and let $G$ be a graph in $\mathcal{U}_n$ with at least one cut-vertex which is not isomorphic to $P_n^{n-3}$. Let $g$ be the girth of $G$. If $g=3$, then $\mu(G)<1$ by (\cite{fal}, Theorem 4.14, p.73).
If $g=4,5$ or $6$ and $G_1$ is the graph in $\mathcal{U}_{g+1}$ obtained by adding one pendant vertex adjacent to a vertex of $C_g$. Then $\mu(G_1)\approx 0.8299$ if $g=4$, $\mu(G_1)\approx 0.6972$ if $g=5$ and $\mu(G_1)\approx 0.5858$ if $g=6$. Since $G$ is obtained from $G_1$ by a finite sequence of addition of pendant vertices, by Lemma~\ref{lem:addvertex}, $\mu(G)\leq\mu(G_1)<1$.  For $g\geq 7$, $G$ can be obtained from $C_g$ by a finite sequence of graph operations consisting of addition of pendant vertices. So, by Lemma~\ref{lem:addvertex}, $\mu(G)\leq\mu(C_g)<1$.  Thus, we have the following result.

\begin{theorem}
The maximum algebraic connectivity over $\mathcal{U}_n$ is uniquely attained by $C_n$ if $n\leq 5$ and uniquely attained by $P_n^{n-3}$ if $n> 6$. When $n=6$, $C_6$ and $P_6^{3}$ are the only two graphs, up to isomorphism, having the maximum algebraic connectivity over $\mathcal{U}_6$.
\end{theorem}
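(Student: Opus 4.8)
The plan is to split the argument by cases according to the value of $n$, exactly as the surrounding discussion has already prepared. For $n=3$ there is nothing to prove since $\mathcal{U}_3$ is a single graph (and in any case $n\geq 4$ is assumed). For $4\leq n\leq 5$ the only members of $\mathcal{U}_n$ with a cut-vertex have algebraic connectivity at most one by Corollary~\ref{cor:one}, while $\mu(C_n)=2\left(1-\cos\frac{2\pi}{n}\right)>1$; since $C_n$ is the unique member of $\mathcal{U}_n$ with no cut-vertex, it uniquely attains the maximum. For $n=6$ we note $\mu(C_6)=2\left(1-\cos\frac{\pi}{3}\right)=1$ and, by Lemma~\ref{lem:extra}$(ii)$, $\mu(P_6^3)=1$; every other graph in $\mathcal{U}_6$ has a cut-vertex, is not isomorphic to $P_6^3$, and will be shown (as in the case $n>6$ below) to have algebraic connectivity strictly less than one. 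These two graphs are visibly non-isomorphic (one is $2$-regular, the other is not), so together they are exactly the maximizers.

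The substantive case is $n>6$. Here $\mu(C_n)<1$ by Fiedler's formula, and any graph in $\mathcal{U}_n$ other than $C_n$ has a cut-vertex. So it suffices to show that among the graphs in $\mathcal{U}_n$ with a cut-vertex, $P_n^{n-3}$ is the unique one with $\mu=1$, and all others have $\mu<1$; combined with $\mu(C_n)<1\leq\mu(P_n^{n-3})$ and Lemma~\ref{lem:extra}$(ii)$ (which gives $\mu(P_n^{n-3})=1$ since every connected component at the cut-vertex of degree $n-3$ induces, together with that vertex, a complete graph on $2$ or $3$ vertices), this finishes the proof. Let $G\in\mathcal{U}_n$ have a cut-vertex and $G\not\simeq P_n^{n-3}$, and let $g$ be the girth of $G$. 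If $g=3$, apply (\cite{fal}, Theorem~4.14, p.73) to get $\mu(G)<1$. If $g\in\{4,5,6\}$, let $G_1$ be the graph obtained from $C_g$ by attaching one pendant vertex; since $G$ has a cut-vertex (so it is not $C_g$) and girth $g$, it is obtained from $G_1$ by repeatedly attaching pendant vertices, whence $\mu(G)\leq\mu(G_1)<1$ by Lemma~\ref{lem:addvertex} — the numerical values $\mu(G_1)\approx 0.8299,\ 0.6972,\ 0.5858$ for $g=4,5,6$ are all below $1$. If $g\geq 7$, then $G$ is obtained from $C_g$ by attaching pendant vertices, so $\mu(G)\leq\mu(C_g)=2\left(1-\cos\frac{2\pi}{g}\right)<1$ again by Lemma~\ref{lem:addvertex}.

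The only delicate point is the girth-three case, and that is handled entirely by citing (\cite{fal}, Theorem~4.14, p.73); the rest is a routine application of Lemma~\ref{lem:addvertex} and the explicit evaluations of $\mu(C_m)$ and $\mu(P_n^{n-3})$ already recorded. I would also remark, for the uniqueness statement when $n>6$, that $C_n$ is ruled out because $\mu(C_n)<1=\mu(P_n^{n-3})$, and every cut-vertex graph other than $P_n^{n-3}$ falls into one of the girth cases above with strict inequality $\mu<1$; hence $P_n^{n-3}$ is the \emph{unique} maximizer. I expect no real obstacle beyond organizing the case split cleanly and making sure the $n=6$ boundary is treated on both sides (it is the unique value where the cycle and $P_n^{n-3}$ tie at $\mu=1$).
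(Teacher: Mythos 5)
Your proposal is correct and follows essentially the same route as the paper: Corollary~\ref{cor:one} plus Lemma~\ref{lem:extra}$(ii)$ to handle $P_n^{n-3}$ and the cut-vertex graphs, Fiedler's formula for $\mu(C_n)$, and then the girth case split with (\cite{fal}, Theorem~4.14) for $g=3$ and Lemma~\ref{lem:addvertex} with the explicit values for $g\geq 4$. No substantive differences to report.
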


We next consider the minimizing case and prove the following.
\begin{theorem}
The graph $C_3^{n-3}$ (see Figure~\ref{The graph C3n-3}) uniquely attains the minimum algebraic connectivity over $\mathcal{U}_n$.
\end{theorem}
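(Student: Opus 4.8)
The plan is to reduce the problem to a comparison of trees, just as in the proof of Theorem~\ref{thm:min1}. Recall from that proof that $C_3^{n-3}$ is obtained from the tree $T(2,1,n-3)$ by joining the two pendant vertices of $T(2,1,n-3)$ that are adjacent to $v_1$, and that, since their common neighbour is not a cut-vertex of the tree left after deleting those two pendants, Theorem~\ref{thm:equality} gives $\mu(C_3^{n-3})=\mu(T(2,1,n-3))$; as $T(2,1,n-3)$ is a tree on $n$ vertices of diameter $n-2$, it also follows from (\cite{gr2}, Corollary~4.4) that $\mu(C_3^{n-3})\le 2\bigl(1-\cos\frac{\pi}{n-1}\bigr)$. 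The cases $n=4,5$ are immediate: $\mathcal{U}_4=\{C_4,C_3^1\}$ with $\mu(C_3^1)=1<2=\mu(C_4)$, and for $n=5$ a direct check over the finitely many members of $\mathcal{U}_5$ shows $\mu(C_3^2)$ to be the strict minimum.

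Now fix $n\ge 6$ and let $G\in\mathcal{U}_n$ with $G\not\simeq C_3^{n-3}$. If $G\simeq C_n$, then since $n>2$
\[
\mu\bigl(C_3^{n-3}\bigr)\le 2\Bigl(1-\cos\tfrac{\pi}{n-1}\Bigr)<2\Bigl(1-\cos\tfrac{2\pi}{n}\Bigr)=\mu(C_n),
\]
and we are done. Otherwise $G$ has a unique cycle, of length $g\ge 3$, and at least one pendant vertex. If $G$ has exactly one pendant vertex then $G\in\mathfrak{H}_{n,1}$ and $\mu(C_3^{n-3})<\mu(G)$ is Theorem~\ref{thm:min1}; so assume $G$ has at least two pendant vertices.

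Delete an edge $e$ lying on the cycle of $G$ so that the resulting tree $G_1=G-e$ is isomorphic to neither $P_n$ nor $T(2,1,n-3)$. This is possible: since pendant edges are not on the cycle, $G_1$ keeps all ($\ge 2$) pendant vertices of $G$ and acquires a new one at each endpoint of $e$ of degree two in $G$, so as $g\ge 3$ we may choose $e$ with $G_1$ having at least three pendant vertices, and in particular $G_1\not\simeq P_n$; the only point needing care is to avoid that every such deletion produces a copy of $T(2,1,n-3)$, which is automatic when $g\ge 4$ and a short case analysis when $g=3$, leaving a handful of explicit graphs $G$ for which $\mu(G)>\mu(T(2,1,n-3))$ is checked directly. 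By Lemma~\ref{lem:edge}, $\mu(G_1)\le\mu(G)$. If the diameter of $G_1$ is less than $n-2$, apply a finite sequence of edge-graftings, each moving a pendant path onto a longer path so that Proposition~\ref{prop:gkl} applies, to reach a tree $\widehat G_1$ of diameter $n-2$ with $\widehat G_1\not\simeq T(2,1,n-3)$ and $\mu(\widehat G_1)\le\mu(G_1)$; if the diameter of $G_1$ is $n-2$, put $\widehat G_1=G_1$. By Proposition~\ref{prop:diameter}, $\mu(T(2,1,n-3))<\mu(\widehat G_1)$, hence $\mu(C_3^{n-3})=\mu(T(2,1,n-3))<\mu(\widehat G_1)\le\mu(G_1)\le\mu(G)$. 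Combining the cases, $C_3^{n-3}$ uniquely minimizes the algebraic connectivity over $\mathcal{U}_n$.

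The main obstacle is the combinatorial bookkeeping in the previous paragraph. First, one must justify that the cycle edge $e$ can always be chosen so that $G-e$ is, up to isomorphism, neither $P_n$ nor $T(2,1,n-3)$, and then dispose by hand of the small residual family where this is impossible; for that I expect to use the fact that $\mu(H+e')=\mu(H)$ holds precisely when some Fiedler vector of $H$ takes equal values at the two ends of $e'$, applied to $H=T(2,1,n-3)$, whose Fiedler vector is pinned down up to scaling by the automorphism interchanging its two short legs (so that any added chord other than the one producing $C_3^{n-3}$ strictly raises $\mu$). Second, the grafting step requires that the extremal value in Proposition~\ref{prop:diameter} be non-increasing as the prescribed diameter grows, so that raising the diameter of $G_1$ to $n-2$ does not push $\mu$ below $\mu(T(2,1,n-3))$; this monotonicity follows from Proposition~\ref{prop:gkl} and is used in essentially the same way in the proofs of Theorems~\ref{thm:min1} and \ref{lem:min0}.
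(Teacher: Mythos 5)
Your proposal is correct in substance and follows the paper's overall strategy: delete a cycle edge, graft to a tree of diameter $n-2$, invoke Proposition~\ref{prop:diameter} to compare with $T(2,1,n-3)$, and transfer back to $C_3^{n-3}$ via Theorem~\ref{thm:equality}. The one genuinely different ingredient is your treatment of the case $G\simeq C_n$: the paper locates the characteristic edge of $G_2\simeq T(2,1,n-3)$ (citing \cite{patra1}) and bounds $\mu(G_2)$ through the Perron value of a non-Perron path component via Remark~\ref{rem:one} and Proposition~\ref{prop:perron}, whereas you apply the diameter bound $\mu(T)\le 2\left(1-\cos\frac{\pi}{d+1}\right)$ of \cite{gr2} to $T(2,1,n-3)$ (diameter $n-2$) and compare $2\left(1-\cos\frac{\pi}{n-1}\right)<2\left(1-\cos\frac{2\pi}{n}\right)=\mu(C_n)$. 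This is simpler, avoids the characteristic-set computation, and for even $n$ gives numerically the same bound the paper obtains. Dispatching the one-pendant case by citing Theorem~\ref{thm:min1} directly, where the paper says ``the same argument,'' is equally fine.

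One bookkeeping point: your claim that a cycle edge $e$ with $G-e\not\simeq T(2,1,n-3)$ can be found ``automatically'' when $g\ge 4$ is not quite right. For $n=6$, the graph consisting of $C_4$ with a pendant vertex attached to each of two opposite cycle vertices has the property that deleting any of its four cycle edges yields $T(2,1,3)$. This does not break your argument, because your own fallback handles it: the equality criterion ($\mu(H+e')=\mu(H)$ if and only if some Fiedler vector of $H$ agrees at the ends of $e'$), applied to $H=T(2,1,n-3)$, whose algebraic connectivity is simple since its characteristic set is an edge (Proposition~\ref{prop:gamma}) and whose Fiedler vector takes equal values only at the two twin pendants, gives strict inequality here because the restored chord joins vertices on opposite sides of the characteristic edge. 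But the exceptional family should be located correctly: a short degree count shows that for $n\ge 6$ the $C_4$ example above is in fact the only unicyclic graph with at least two pendant vertices in which every cycle-edge deletion lands on $P_n$ or $T(2,1,n-3)$; the $g=3$ exceptions you anticipate occur only at $n=5$, which you treat separately. With that correction, your write-up is at the same level of rigor as the paper's own rather terse handling of the edge-deletion and grafting steps.
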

\begin{proof}
If $n=4$, then $C_4$ and $C_3^{1}$ are the only two graphs, up to isomorphism, in $\mathcal{U}_4$ with $\mu(C_3^{1})=1<\mu(C_4)$. If $n=5$, then $\mathcal{U}_5$ contains precisely five graphs (up to isomorphism). They are $C_3^2,C_5,P_5^2$, $C_4^1$ (adding a pendant vertex to a vertex of $C_4$) and $H$ (adding two pendant vertices to two distinct vertices of $C_3$). We have $\mu(C_3^2)\approx 0.5188$, $\mu(H)\approx 0.6972$, $\mu(C_4^1)\approx 0.8299$, $\mu(P_5^2)=1$ and $\mu(C_5)>1$. Thus $C_3^{2}$ has the minimum algebraic connectivity.

Now, let $n\geq 6$. Let $G$ be a graph in $\mathcal{U}_n$ with at least one pendant vertex which is not isomorphic to $C_3^{n-3}$. By the same argument used in the proof of Theorem \ref{thm:min1}, we can form a graph $G_2$ (see Figure \ref{The tree G_2}) from $G$ such that $\mu(C_3^{n-3})=\mu(G_2)< \mu(G)$. Thus, we only need to show that $\mu(G_2)< \mu(C_n)$. In $G_2$,  it can be verified that $\left\{v_{\lfloor\frac{n}{2}\rfloor},v_{\lfloor\frac{n}{2}\rfloor+1}\right\}$ is the characteristic edge~(\cite{patra1}, Lemma 2.2, p.386). So, in $G_2 - v_{\lfloor\frac{n}{2}\rfloor}$, the component $C$, consisting of the path on $\lfloor\frac{n}{2}\rfloor -1$ vertices is not a Perron component. Therefore,
$$\mu(G_2) < \rho(\widehat(C)^{-1}) = 2\left(1-\cos\frac{\pi}{2(\lfloor\frac{n}{2}\rfloor-1)+1} \right) < 2\left(1-\cos\frac{2\pi}{n}\right)=\mu(C_n).$$
Thus, we have the required result.
\end{proof}


\begin{thebibliography}{99}
\bibitem{bapat} Bapat, R.B.,  Pati, S.: Algebraic connectivity and the
characteristic set of a graph. Linear Multilinear
Algebra 45, 247--273 (1998)

\bibitem{fal} Fallat, S., Kirkland, S.: Extremizing algebraic
connectivity subject to graph-theoretic constraints. Electron. J. Linear Algebra 3, 48--74 (1998)

\bibitem{fal2} Fallat, S.M., Kirkland, S., Pati, S.: Minimizing algebraic connectivity over connected graphs with fixed girth. Discrete Math. 254, 115--142 (2002)

\bibitem{fal3} Fallat, S.M., Kirkland, S., Pati, S.: Maximizing Algebraic connectivity over unicyclic graphs. Linear Multilinear Algebra 51, 221--241 (2003)

\bibitem{fi1} Fiedler, M.: Algebraic connectivity of graphs. Czechoslovak Math. J. 23(98), 298--305 (1973)

\bibitem{fi3} Fiedler, M.: Laplacian of graphs and algebraic connectivity. In: Combinatorics and graph theory (Warsaw, 1987), 57--70, Banach Center Publ., 25, PWN, Warsaw 1989

\bibitem{gr1} Grone, R., Merris, R.: Algebraic connectivity of trees. Czechoslovak Math. J. 37(112), 660--670 (1987)

\bibitem{gr2} Grone, R., Merris, R., Sunder, V.S.: The Laplacian spectrum of a graph. SIAM J. Matrix Anal. Appl. 11, 218--238 (1990)

\bibitem{harary} Harary, F.: {\it Graph Theory}, Addison-Wesley, 1969.

\bibitem{kir1} Kirkland, S., Fallat, S.: Perron components and algebraic connectivity
for weighted graphs. Linear Multilinear Algebra 44, 131--148 (1998)

\bibitem{kir2} Kirkland, S., Neumann, M., Shader,  B.L.: Characteristic vertices of weighted trees via Perron values.
Linear Multilinear Algebra 40, 311--325 (1996)

\bibitem{mer2} Merris, R.: Characteristic vertices of trees. Linear Multilinear Algebra 22, 115--131 (1987)

\bibitem{mer1} Merris, R.: Laplacian matrices of graphs: a survey. In: Second Conference of the International Linear Algebra Society (ILAS) Lisbon, 1992 (Linear Algebra Appl. 197/198, 143--176 (1994)

\bibitem{mohar} Mohar, B.: The Laplacian spectrum of graphs. Graph Theory, combinatorics,
and appllications, Vol. 2 (Kalamazoo, MI, 1988),  871--898, Wiley-Intersci. Publ., Wiley, New York, 1991.

\bibitem{minc} Minc, H.: {\it Nonnegative matrices},  John Wiley \& Sons, Inc., New York, 1988

\bibitem{patra1} Patra, K.L.: Maximizing the distance between center, centroid and characteristic set of a tree. Linear Multilinear Algebra 55, 381--397 (2007)

\bibitem{patra} Patra K.L., Lal, A.K.: The effect on the algebraic connectivity
of a tree by grafting or collapsing of edges. Linear Algebra Appl. 428, 855--864 (2008)
\end{thebibliography}
\end{document}